\providecommand{\norm}[1]{\left\lVert#1\right\rVert}
\newtheorem{thm}{Theorem}[section]
\newtheorem{lem}[thm]{Lemma}
\theoremstyle{definition}
\newtheorem{defn}[thm]{Definition}
\newtheorem{rem}[thm]{Remark}
\numberwithin{equation}{section}
\newcommand{\R}{\mathbb{R}}
\newcommand{\IC}{\mathbb{C}}
\newcommand{\cH}{\mathcal{H}}
\newcommand{\cF}{\mathcal{F}} 
\providecommand{\abs}[1]{\left\lvert#1\right\rvert}
\newcommand{\loc}{\operatorname{loc}}
\renewcommand{\L}{\operatorname{L}} 
\renewcommand{\H}{\operatorname{H}}
\newcommand{\C}{\operatorname{C}} 
\renewcommand{\H}{\operatorname{H}} 
\newcommand{\W}{\operatorname{W}}
\newcommand{\Hdot}{\dot{\H}\protect{\vphantom{H}}} 
\newcommand{\E}{\mathsf{E}} 
\newcommand{\ree}{{\mathbb{R}^{n+1}}}
\newcommand{\gradx}{\nabla_x}
\renewcommand{\div}{\operatorname{div}}
\newcommand{\divx}{\div_x}
\newcommand{\dhalf}{D_t^{1/2}} 
\newcommand{\HT}{H_t} 
\renewcommand{\i}{\mathrm{i}} 
\renewcommand{\d}{\, \mathrm{d}} 
\renewcommand\Re{\operatorname{Re}}
\newcounter{teller}
\newcommand{\cl}[1]{\overline{#1}} 
\DeclareMathOperator{\dom}{\mathsf{D}} 
\newcommand{\pd}{\partial}
\newcommand{\m}{m}
\def\Xint#1{\mathchoice
{\XXint\displaystyle\textstyle{#1}}%
{\XXint\textstyle\scriptstyle{#1}}%
{\XXint\scriptstyle\scriptscriptstyle{#1}}%
{\XXint\scriptscriptstyle%
\scriptscriptstyle{#1}}%
\!\int}
\def\XXint#1#2#3{{\setbox0=\hbox{$#1{#2#3}{%
\int}$ }
\vcenter{\hbox{$#2#3$ }}\kern-.6\wd0}}
\def\barint{\,\Xint -} 
\def\bariint{\barint_{} \kern-.4em \barint}
\def\bariiint{\bariint_{} \kern-.4em \barint}
\renewcommand{\iint}{\int_{}\kern-.34em \int} 
\renewcommand{\iiint}{\iint_{}\kern-.34em \int} 
\providecommand{\norm}[1]{\lVert#1\rVert}
\newcommand\U{\mathcal{U}}
\newcommand{\ru}{\mathrm{u}}
\title[Fractional powers of  parabolic operators]{On local regularity estimates for fractional powers of parabolic operators with time-dependent\\ measurable coefficients}
\address{Malte Litsg{\aa}rd \\Department of Mathematics, Uppsala University\\
S-751 06 Uppsala, Sweden}
\email{malte.litsgard@math.uu.se}
\address{Kaj Nystr\"{o}m\\Department of Mathematics, Uppsala University\\
S-751 06 Uppsala, Sweden}
\email{kaj.nystrom@math.uu.se}
\thanks{K.N was partially supported by grant  2017-03805 from the Swedish research council (VR)}
\author{Malte Litsg{\aa}rd and Kaj Nystr{\"o}m}
\subjclass[2010]{Primary: 35K10, 35K20; Secondary: 26A33, 42B25, 47D06}
\keywords{Second order parabolic equations, parabolic Kato square root estimate, fractional parabolic equation, maximal accretive, maximal dissipative, strongly continuous semigroup, extension problem, Dirichlet to Neumann map, bounded and measurable coefficient, half-order derivative, local regularity}
\date{\today}
\begin{document}

\newpage
\begin{abstract} We consider fractional operators of the form
$$\cH^s=(\partial_t  -\div_{x} ( A(x,t)\nabla_{x}))^s,\ (x,t)\in\mathbb R^n\times\mathbb R,$$
where $s\in (0,1)$ and $A=A(x,t)=\{A_{i,j}(x,t)\}_{i,j=1}^{n}$  is an accretive, bounded, complex, measurable, $n\times n$-dimensional matrix valued function. We study the fractional operators ${\cH}^s$ and their relation to the initial value problem
\begin{equation*}
 \begin{split}
(\lambda^{1-2s}\ru')'(\lambda) &=\lambda^{1-2s}\cH \ru(\lambda), \quad \lambda\in (0, \infty),  \\
  \ru(0) & = u,
 \end{split}
\end{equation*}
in $\mathbb R_+\times \mathbb R^n\times\mathbb R$. Exploring the relation, and making  the additional assumption that $A=A(x,t)=\{A_{i,j}(x,t)\}_{i,j=1}^{n}$ is real, we derive some local properties of solutions to the non-local Dirichlet problem
\begin{align*}
\cH^su=(\partial_t  -\div_{x} ( A(x,t)\nabla_{x}))^su&=0\mbox{ for $(x,t)\in \Omega \times J$},\notag\\
u&=f\mbox{ for $(x,t)\in \mathbb R^{n+1}\setminus (\Omega \times J)$}.
\end{align*}
Our contribution is that we allow for non-symmetric and time-dependent coefficients.
\end{abstract}

\maketitle

\section{Introduction and background}

Fractional powers of closed linear operators
in Hilbert and Banach spaces is an important and classical topic in operator theory with fundamental contributions attached to
Bochner, Balakrishnan, Komatsu and many other prominent researchers, see  \cite{bochner1949,feller1952,balakrishnan1959, hille1948, phillips1952,komatsu1969}. The construction and application of
fractional powers of sectorial operators, i.e. linear operators having $\mathbb R_-$  contained in their resolvent set, and fulfilling an additional resolvent estimate, have attracted much attention resulting in a substantial literature on the topic, see \cite{balakrishnan1960,komatsu1969} and the extensive treatments in
\cite{Kato,MS,Haase}.

More recently,  Caffarelli and Silvestre \cite{caffarelli2007} induced new energy into the field by noting that if $s\in (0,1)$, and if $\mathcal{U}$ solves
\begin{equation}
\partial_\lambda(\lambda^{1-2s}\partial_\lambda\mathcal{U})(\lambda,x) =-\lambda^{1-2s} \Delta_x \mathcal{U}(\lambda,x),\ \mathcal{U}(0,x)  = u(x),\  (\lambda,x)\in \mathbb R_+\times\mathbb R^n,
\label{DN2}
\end{equation}
where $u\in \dom (( - \Delta_x )^{s})$, then
\begin{align}\label{DN1}
-\lim\limits_{\lambda \rightarrow 0^+} \lambda^{1-2s} \partial_\lambda \mathcal{U}(\lambda,x)  = c_s( - \Delta_x )^{s}u (x), \qquad x\in\R^n.
\end{align}
In particular, the non-local fractional Laplace operator can be realized as a Dirichlet to Neumann map using an extension problem for an associated (local) linear degenerate elliptic equation. As one consequence, local properties of solutions to $( - \Delta_x )^{s}u=0$ in a domain $\Omega\subset\mathbb R^n$ can be deduced using corresponding results for linear degenerate elliptic equations.

In \cite{NS} and \cite{Stinga-Torrea-SIAM}, independently, the parabolic analogue of  the result of Caffarelli and Silvestre \cite{caffarelli2007} was discovered and it is proved that if $s\in (0,1)$, and if $\mathcal{U}$ now solves
\begin{equation}
\partial_\lambda(\lambda^{1-2s}\partial_\lambda\mathcal{U})(\lambda,x,t) =\lambda^{1-2s} (\partial_t- \Delta_x )\mathcal{U}(\lambda,x,t),\ \mathcal{U}(0,x,t)  = u(x,t),\  (\lambda,x,t)\in \mathbb R_+\times\mathbb R^n\times \R,
\label{DN2+}
\end{equation}
where $u\in \dom (( \partial_t- \Delta_x )^{s})$, then
\begin{align}\label{DN1+}
 -\lim\limits_{\lambda \rightarrow 0^+} \lambda^{1-2s} \partial_\lambda \mathcal{U}(\lambda,x,t)=c_s(\partial_t- \Delta_x )^{s}u (x,t), \qquad (x,t)\in\R^n\times\R.
\end{align}
In particular, also the non-local fractional heat operator can be realized as a Dirichlet to Neumann map using an extension problem, now for an associated (local) linear degenerate parabolic equation, see also \cite{B-DLC-S,GFT}. As one consequence, local properties of solutions to $( \partial_t- \Delta_x )^{s}u=0$ in a domain $\Omega\times J\subset\mathbb R^n\times \mathbb R$ can be deduced by developing corresponding results for linear degenerate parabolic equations. Note that if $\mathcal{U}$ is independent of $t$, then formally the equation in \eqref{DN2+} coincides with the equation in \eqref{DN2}.

Given $u$, the construction in \cite{NS,Stinga-Torrea-SIAM} of the solution
$\mathcal{U}$ to \eqref{DN2+}, as well as  the corresponding construction in the case of more general operators $\partial_t  +\mathcal{L}_x$, $\mathcal{L}_x=-\div_{x} ( A(x)\nabla_{x})$, with $A$ real, bounded, uniformly elliptic and symmetric, can be stated
\begin{align}\label{51intro+}
\mathcal{U}(\lambda,x,t)&=\frac{1}{\Gamma(s)}\int_0^\infty h_s({\lambda^2}/{4r})e^{-r(\partial_t  +\mathcal{L}_x)}u(x,t) \, \frac{\d r}{r},
\end{align}
where $h_s(\tau):=\tau^{s}e^{-\tau}$. We refer to \cite{Stinga-Torrea-SIAM,B-DLC-S} for this formula and its details. Using the ellipticity of $A$,  the semigroup $e^{-r\mathcal{L}_x}$ is well understood and facilitates estimates. Furthermore, one can deduce that
\begin{align}\label{51intro++}
\mathcal{U}(\lambda,x,t)&= \iint_{\mathbb R^{n}}\Gamma_{s,\lambda}(x,t,y,s)u(y,s)\, \d y\d s,
\end{align}
for a non-negative kernel $\Gamma_{s,\lambda}$ which can be computed explicitly based on the fundamental solution for $\partial_t  +\mathcal{L}_x$.  This analysis relies heavily on the fact that $A$ is independent of $t$ and that $\partial_t$ and $\mathcal{L}_x$ commute.  We  refer to \cite{GT,GT1}, and \cite{GFT}, for interesting accounts of the research in this direction, also  covering certain classes of strongly degenerate parabolic operators of Kolmogorov type.

In this paper we are interested in generalizations of \eqref{DN2+} and \eqref{DN1+}, with $( \partial_t- \Delta_x )$ and $( \partial_t- \Delta_x )^{s}$ replaced by
$(\partial_t  -\div_{x} ( A(x,t)\nabla_{x}))$ and $(\partial_t  -\div_{x} ( A(x,t)\nabla_{x}))^s$, respectively. Concerning $A(x,t)=\{A_{i,j}(x,t)\}_{i,j=1}^{n}$ we  assume only that the matrix $A(x,t)$ is complex, measurable,  bounded and accretive. In particular, in the case of real coefficients we are concerned with fractional powers of second order parabolic operators  allowing for non-symmetric and time-dependent coefficients. In this generality, the very definition of the operator, and its fractional powers, is in itself an issue which requires concepts and notions from operator theory. In our case, the essence is that
    $(\partial_t  -\div_{x} ( A(x,t)\nabla_{x}))$ can be realized as a  {maximal accretive} operator $(\cH,\dom)$ on a certain energy space modelled on $\sqrt{\partial_t- \Delta_x}$. As a
    consequence, the fractional power of $\cH$, $\cH^s$ for $s\in (0,1)$, which formally coincides with $(\partial_t  -\div_{x} ( A(x,t)\nabla_{x}))^s$, can be defined
    using for example the Balakrishnan representation, see \eqref{ba}. These observations serves as the starting point for our analysis. We refer to Section \ref{prelim} for precise definitions.

Our work,  and the definition of $\cH^s=(\partial_t  -\div_{x} ( A(x,t)\nabla_{x}))^s$,  is rooted in recent work of the second author, together with P. Auscher and M. Egert,  concerning boundary value problems for second order parabolic equations (and systems) of the form
    \begin{eqnarray}\label{eq1l}
    \partial_tu-\div_{\lambda,x}A(x,t)\nabla_{\lambda,x}u=0,
    \end{eqnarray}
    in the  upper-half parabolic space $\mathbb R_+^{n+2}:=\{(\lambda,x,t)\in \mathbb R\times \mathbb R^n\times \mathbb R:\ \lambda>0\}$, $n\geq 1$, with boundary determined by $\lambda=0$, assuming only bounded, measurable, uniformly elliptic and complex coefficients.  In~\cite{N2, CNS, N1}, the solvability for Dirichlet, regularity and Neumann problems with data in $\L^2$ was established for  parabolic equations as \eqref{eq1l} under the additional assumptions that the elliptic part is independent of the time variable $t$ and that it has either constant (complex) coefficients, real symmetric coefficients, or small perturbations thereof. The analysis in \cite{N2, CNS, N1} was advanced further in~\cite{AEN}, where a first order strategy to study boundary value problems for parabolic systems with second order elliptic part in the upper half-space was developed. The outcome of~\cite{AEN} was the possibility to address arbitrary parabolic equations (and systems) as in \eqref{eq1l} with coefficients depending also on time and  the transverse variable with additional transversal regularity. Also, in \cite{AEN1} parabolic equations as in \eqref{eq1l} were considered, assuming that the coefficients are real, bounded, measurable, uniformly elliptic, but not necessarily symmetric, and  the solvability of the Dirichlet problem with data
    in $\L^p$ was established.

    A particular outcome of the technology developed in \cite{AEN} was the resolution of a parabolic version of the famous Kato square root conjecture, see also \cite{N1} for important preliminary work on the parabolic  Kato square root problem,  originally posed for second order elliptic operators by Tosio Kato and solved in the elliptic setting in \cite{AHLeT},  \cite{AHLMcT}. The  {maximal accretivity} of the operator $(\cH,\dom)$  and the resolution of the parabolic  Kato square root problem are fundamental to our work.

    Our contribution is twofold. First, by connecting several lines of thought in the literature we are able to establish the connection
    between $(\partial_t  -\div_{x} ( A(x,t)\nabla_{x}))^s$ and an extension problem as the one in  \eqref{DN2+} and \eqref{DN1+}, but with
    $( \partial_t- \Delta_x )$ replaced by $\cH=(\partial_t  -\div_{x} ( A(x,t)\nabla_{x}))$, see Theorem \ref{thm1}. Second, connecting the extension to (local) linear degenerate parabolic equations, we prove, assuming in addition that $A=A(x,t)=\{A_{i,j}(x,t)\}_{i,j=1}^{n}$ is real, but not necessarily symmetric, that solutions to $(\partial_t  -\div_{x} ( A(x,t)\nabla_{x}))^su=0$ are H{\"o}lder continuous, and that non-negative solutions
    satisfy the (classical) Harnack inequality for linear parabolic equations, see Theorem \ref{Holder} and Theorem \ref{Harnack}. The constants appearing in these results/estimates only depend on
    dimension $n$, and the boundedness and ellipticity of $A$.

\subsection{Organization of the paper} The rest of the paper is organized as follows.  In Section \ref{prelim} we introduce the operator $\cH^s$, $s\in (0,1)$, which formally coincides with
     $(\partial_t  -\div_{x} ( A(x,t)\nabla_{x}))^s$. We here also state the Kato square root estimate, its implication on explicit descriptions of $\dom({\cH}^s)$, we recall facts from semigroup theory used in the paper, and we define what we mean by a solution to $\cH^su=0$ in $\Omega\times J$. In Section \ref{main} we formulate the extension problem and we state three results: Theorem \ref{thm1}, Theorem \ref{Holder} and Theorem \ref{Harnack}. In this section we also briefly discuss the path to the proofs of our results and we relate our effort concerning the extension problem to the vast operator theoretical literature on the topic. In Section \ref{sec2} we prove Theorem \ref{thm1}.  In Section
\ref{sec3}  we prove that local properties of solutions to the non-local Dirichlet problem associated to $(\partial_t  -\div_{x} ( A(x,t)\nabla_{x}))^s$ can be studied through
the corresponding problems  for local linear degenerate parabolic equations. In Section \ref{sec4} we specialize to real coefficients and prove Theorem \ref{Holder} and Theorem \ref{Harnack} using specific non-negative kernels which we derive based on the fundamental solutions for $\partial_t  -\div_{x} ( A(x,t)\nabla_{x})$ constructed in \cite{A}. In Section \ref{conc} we give some concluding remarks.

    \section{Parabolic operators, their powers and the Kato square root estimate}\label{prelim}

    Let $H:=\L^2(\mathbb R^{n+1}):=\L^2(\mathbb R^{n+1},\d x\d t)$ be the standard ${ \L}^2$ space of complex valued functions on $\mathbb R^{n+1}$ equipped with inner product $\langle\cdot,\cdot\rangle:=\langle\cdot,\cdot\rangle_H$ and norm $||u||_{2}:=||u||_{H}:=\langle u,u\rangle^{1/2}$. We introduce the  {energy space} $\dot{\E}(\ree)$ by taking the closure of all (complex) test functions $v \in \C_0^\infty(\ree)$ with respect to
\begin{align*}
\|v\|_{\dot{\E}(\ree)} := ||\gradx v||_{2}+||\dhalf v||_{2}.
\end{align*}
The half-order $t$-derivative $\dhalf$ is defined via the Fourier symbol $ |\tau|^{1/2}$.  The corresponding inhomogeneous energy space $\E(\ree) := \dot{\E}(\ree) \cap \L^2(\ree)$ is equipped with the  Hilbertian norm
\begin{align*}
\|v\|_{{\E}(\ree)} := \bigl (||v||_{2}^2+ ||\gradx v||_{2}^2+||\dhalf v||_{2}^2\bigr )^{1/2}.
\end{align*}
For short we have the triple
\begin{align}\label{spaces}
H=\L^2(\mathbb R^{n+1}),\ V:={\E}(\ree),\ V':={\E}(\ree)^*,
\end{align}
where $V'={\E}(\ree)^*$ is the (anti)-dual of $V={\E}(\ree)$. Hence we consider  two Hilbert spaces $H$ and  $V$ such that
$$V\hookrightarrow H,$$
i.e., $V$  is continuously and densely embedded in $H$, and $$V\hookrightarrow H\hookrightarrow V'.$$

    Let $A=A(x,t)=\{A_{i,j}(x,t)\}_{i,j=1}^{n}$ be a complex, measurable, $n\times n$-dimensional matrix valued function such that
 \begin{equation}\label{ellip}
 c_1|\xi|^2\leq \Re (A(x,t) \xi \cdot \cl{\xi}), \qquad
 |A(x,t)\xi\cdot\zeta|\leq c_2 |\xi||\zeta|,
\end{equation}
for some $c_1$, $c_2\in (0,\infty)$, and for all $\xi,\zeta\in \mathbb C^{n}$, $(x,t)\in\mathbb R^{n+1}$. Based on $A$ we introduce the sesquilinear form
\begin{align*}
\mathcal E(u,v) := \iint_{\ree} A(x,t) \gradx u \cdot \cl{\gradx v} +\HT\dhalf u \cdot \cl{\dhalf v} \, \d x\d t,\ u,v\in V,
\end{align*}
where  $\HT$ denotes the Hilbert transform with respect to the $t$-variable. The sesquilinear form induces a bounded operator $\cH$ from $V$ into $V'$ via
\begin{align}
\label{hidden coercivity}
\langle \cH u,v\rangle_{V',V} := \mathcal E(u,v),\ u,v\in V,
\end{align}
i.e., $\cH\in\mathcal L(V,V')$, where $\mathcal L(V,V')$ is the space of all linear
and bounded operators from $V$ to $V'$.

While $\cH$ initially is an unbounded operator on $H$ we consider its restriction to
\begin{align}\label{domain}
\dom:= \{u \in V : \cH u \in H \}.
\end{align}
Recall that by definition this means that if $u \in V$, then
$u\in\dom$ if and only if there exists a constant c such that,
\begin{align*}
|\mathcal E(u,v)|= \biggl |\iint_{\ree} A(x,t) \gradx u \cdot \cl{\gradx v} +\HT\dhalf u \cdot \cl{\dhalf v} \, \d x\d t\biggr |\leq c ||v||_2,
\end{align*}
for all $v \in V$. Note that boundary conditions  are encoded in $\dom$ by a formal integration by parts only if one restricts to the part of $\cH$ in $H$. Note also that formally the sesquilinear form induces, if we factorize $\partial_t = \dhalf \HT \dhalf$, the second order parabolic operator
 \begin{eqnarray}\label{eq1deg-}
\partial_t  -\div_{x} ( A(x,t)\nabla_{x}),\ (x,t)\in \mathbb R^{n+1}.
 \end{eqnarray}
Throughout the paper we will, unless otherwise stated, identify $\cH$ with its restriction to the domain $\dom$ introduced in \eqref{domain}.

 \subsection{Maximal accretivity and the definition of $\cH^s$} Recall that an operator $\cH$ in $H$ is {maximal accretive} if $\cH$ is closed and for every $\sigma \in \IC$ with $\Re \sigma <0$, the operator $\sigma - {\cH}$ is invertible on $H$ and the resolvent
$(\sigma - {\cH})^{-1}$ satisfies the estimate $\|(\sigma - {\cH})^{-1}\|_{H \to H} \leq (|\Re \sigma|)^{-1}$. The starting point for this paper is the following theorem.
 \begin{thm}\label{Maxdom} The part of $\cH$ in $H$, with maximal domain $\dom$ defined in \eqref{domain},
is {maximal accretive}. The analogous result holds for the dual of $\cH$, $\cH^\ast$.
\end{thm}

 The proof of Theorem \ref{Maxdom} can be found in Lemma 4 in \cite{AE}.
Using Theorem \ref{Maxdom} the fractional powers
 ${\cH}^s$, for $s\in (0,1)$, are well-defined and we will connect them to a local extension problem. By Theorem \ref{Maxdom}, the operator ${\cH}$ is maximal accretive and therefore ${\cH}$  has a bounded $H^\infty$-calculus. Using this the fractional powers
 ${\cH}^s$, $s\in (0,1)$,  are well-defined through the  Balakrishnan representation
 \begin{align}\label{ba}
 {\cH}^su:=\frac{\sin(s \pi)}{\pi}\int_0^\infty \lambda^{s-1}(\lambda+\cH)^{-1}\cH u\, \d \lambda,
\end{align}
for $u\in \dom$. For background on (maximal) accretive operators, dissipative operators, semigroup theory, sectorial operators, functional calculus, $H^\infty$-calculus and fractional operators, we refer in particular to \cite{Kato,Haase,Mc-,Mc,MS,Yosida}.

The domain of ${\cH}^s$, $\dom({\cH}^s)$, is the  space  $\{u\in H:\ {\cH}^su\in H\}$ equipped with the graph norm
\begin{align}\label{do1}\|u\|_{\dom({\cH}^s)}:=(\|u\|_2^2+\|\cH^s u\|_2^2)^{1/2}.
\end{align}
In particular, $\dom({\cH}^s)$ is a Hilbert space. If $0<s_1\leq s_2<1$, then
\begin{align}\label{do2}
\dom\subset\dom({\cH}^{s_2})\subseteq \dom({\cH}^{s_1})\subset H,
\end{align}
and
\begin{align}\label{do3}\mbox{{$\dom$ is a core for ${\cH}^s$} for all $s\in (0,1)$},
\end{align}
 i.e.,
\begin{equation}\label{do4}
\mbox{$\{(u,\cH^su):\ u\in \dom\}$ is dense in $\{(u,\cH^su):\ u\in \dom({\cH}^s))\}$ in the graph norm}.
\end{equation}
Furthermore, $(\cH,\dom)$ and $(\cH^\ast,\dom(\cH^\ast))$, $\dom(\cH^\ast):=\{u \in V : \cH^\ast u \in H \}$ are densely defined operators in $H$ in the sense that $\dom$ and $\dom(\cH^\ast)$  are dense in $H$.

We will need the fact that
\begin{align}\label{inter}
\dom({\cH}^s)=[H,\dom]_s
\end{align}
where $[\cdot,\cdot]_s$ denotes complex interpolation. To conclude \eqref{inter} we first note that ${\cH}$ is one-to-one on $\dom$. Indeed, if ${\cH}u=0$, then
\begin{align}\label{relaa}
\mathcal E(u,v)=\iint_{\ree} A(x,t) \gradx u \cdot \cl{\gradx v} +\HT\dhalf u \cdot \cl{\dhalf v} \, \d x\d t=0,
\end{align}
for all $v\in V$.  Consider the modified sesquilinear form
\begin{align*}
 \mathcal E_\delta(u,v) := \iint_{\R^{n+1}} A(x,t)\nabla_x u \cdot \cl{\nabla_x(1+\delta \HT) v} + \HT \dhalf u \cdot \cl{\dhalf (1+\delta \HT) v}\, \d x\d t,
\end{align*}
where $\delta$ is a  real number yet to be chosen. The Hilbert transform $\HT$ is a skew-symmetric isometric operator with inverse $-\HT$ on $\dot{\E}(\mathbb R^{n+1})$ and $V$. Hence, $1+\delta \HT$ is invertible on these spaces for any $\delta \in \R$. The key observation  is that if $A$ satisfies  \eqref{ellip}, and if we fix $\delta>0$ small enough and only depending on the structural parameters, then $\mathcal E_\delta$ is a bounded coercive sesquilinear form on $\dot{\E}(\mathbb R^{n+1})$. Indeed, using \eqref{ellip}, we first have
\begin{align*}
| \mathcal E_\delta(u,v)|&\lesssim \|\nabla_x u\|_{2}\|\nabla_x v\|_{2}+\| \dhalf u\|_{2}\| \dhalf v\|_{2}.
\end{align*}
Second, following the same argument as \cite{N1}, we see that
\begin{align}\label{eq:coer+}
\Re \mathcal E_\delta(u,u)&\ge (c_1-c_2\delta)\|\nabla_x u\|_{2}^2 + \delta \|\HT \dhalf u \|_{2}^2.
\end{align}
In particular, choosing $\delta$ small enough, and just depending on the structural constants, we see that
\begin{align}\label{eq:coer+a}
\Re \mathcal E_\delta(u,u)&\gtrsim \|\nabla_x u\|_{2}^2 + \delta \|\HT \dhalf u \|_{2}^2.
\end{align}
Using \eqref{relaa} with $v=(1+\delta \HT) u$ we see that $\mathcal E_\delta(u,u)=0$ and hence by \eqref{eq:coer+a},
\begin{align}\label{eq:coer+abla}
\|\nabla_x u\|_{2}^2 + \delta \|\HT \dhalf u \|_{2}^2=0.
\end{align}
We can conclude that $u$ is constant, see Lemma 3.3 in \cite{AEN}. As $u\in\dom$ it follows that $u\equiv 0$. Having concluded that the maximal accretive operator $(\cH,\dom)$ is one-to-one on $\dom$, \eqref{inter} now follows from \cite{AMN}, see Section 5 in \cite{AMN}, or Corollary 4.30 in \cite{Lu}.

\subsection{The parabolic Kato square root estimate} The following theorem is the resolution of the parabolic version of the famous Kato square root conjecture proved in \cite{AEN}.
\begin{thm}\label{thm:Kato} The square root of $\cH$, $\sqrt{\cH}$, is well-defined and the domain of the square root is that of the accretive form, that is, $\dom(\sqrt{\cH}) = V$. The two-sided estimate
\begin{align*}
\|\sqrt {\cH}\, u\|_{2} \sim \|\nabla_x u\|_{2}+ \| \dhalf u\|_{2}  \qquad (u \in V),
\end{align*}
holds with implicit constants depending only upon n and ellipticity constants of $A$. The same results holds for the dual of $\cH$, $\cH^\ast$.
\end{thm}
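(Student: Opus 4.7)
The plan is to follow the first-order strategy developed in \cite{AEN}. A preliminary remark is that both the upper bound $\|\sqrt{\cH}u\|_{2}\lesssim \|\gradx u\|_{2}+\|\dhalf u\|_{2}$ and the lower bound $\|\gradx u\|_{2}+\|\dhalf u\|_{2}\lesssim \|\sqrt{\cH}u\|_{2}$ are highly non-trivial for non-symmetric, non-self-adjoint parabolic operators (the Kato--Lions numerical-range bound only covers sectorial angles strictly less than $\pi/4$), and the plan is to obtain both inequalities simultaneously from a single quadratic estimate for an auxiliary first-order bisectorial operator.

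On the Hilbert space $\mathcal H = H \oplus H^{n} \oplus H$ introduce
\begin{equation*}
\mathbf D = \Pfull, \qquad \mathbf B = \Mfull,
\end{equation*}
so that $\mathbf D$ is a closed bisectorial first-order differential operator and $\mathbf B$ is a bounded multiplication operator, invertible and accretive on the closure of $\ran(\mathbf D^{\ast})$. The action of $\mathbf{DB}$ encodes the equation $\cH u = 0$ on vectors of the form $(u,\, A\gradx u,\, \dhalf u)^{\top}$, and the parabolic Kato square root estimate is equivalent, via the by now classical abstract reduction of Axelsson--Keith--McIntosh, to the quadratic estimate
\begin{equation*}
\int_{0}^{\infty} \bigl\| t\mathbf{DB}\bigl(I + t^{2}(\mathbf{DB})^{2}\bigr)^{-1}\mathbf v\bigr\|_{\mathcal H}^{2}\,\frac{\d t}{t} \sim \|\mathbf v\|_{\mathcal H}^{2}, \qquad \mathbf v \in \overline{\ran(\mathbf{DB})},
\end{equation*}
which is equivalent in turn to $\mathbf{DB}$ admitting a bounded $H^{\infty}$-calculus on the closure of its range.

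The heart of the matter, and the main obstacle, is to establish this quadratic estimate. My plan mirrors \cite{AEN} in three stages: first, prove parabolic $L^{2}$ off-diagonal decay for the resolvent family $(I + \i t\mathbf{DB})^{-1}$ with respect to the parabolic distance $|x-y| + |t-s|^{1/2}$, using the accretivity of $\mathbf B$ and the scaling built into $\mathbf D$; second, reduce the quadratic estimate to a parabolic Carleson measure bound on a principal part operator $\gamma_{t}$ through a standard averaging and $T(1)$-type argument; third, verify that Carleson bound by a parabolic $T(b)$ theorem with test functions adapted to anisotropic Whitney boxes $Q \times I$ with $|I| \sim \ell(Q)^{2}$. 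The principal technical obstacle specific to the parabolic setting, and absent in the elliptic Kato problem of \cite{AHLMcT}, is the non-locality of $\dhalf$ and $\HT$: their commutators with cutoffs supported in parabolic cubes must be controlled in $L^{2}$, which is handled via homogeneous $H^{1/2}$-calculus in the time variable combined with careful Hilbert transform bounds. Once the quadratic estimate is established, the two-sided parabolic Kato estimate for $\cH$ is recovered by projecting onto the components of $\mathbf v$ carrying $\gradx u$ and $\dhalf u$, while the statement for $\cH^{\ast}$ follows by an identical argument after replacing $A$ with $A^{\ast}$ and $\HT$ with $-\HT$.
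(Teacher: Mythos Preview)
The paper does not supply its own proof of this theorem: it is stated as the resolution of the parabolic Kato problem and attributed to \cite{AEN}, with no further argument given. Your outline correctly sketches the first-order strategy of \cite{AEN} (bisectorial operator $\mathbf{DB}$, quadratic estimates via off-diagonal decay and a parabolic $T(b)$/Carleson argument, handling the non-locality of $\dhalf$ and $\HT$), so in substance you are reproducing the approach the paper cites rather than offering an alternative.
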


\subsection{The domains of $\cH$ and ${\cH}^s$} To understand $\dom=\dom(\cH)$  is a largely open problem often referred to as the maximal regularity problem:  to prove that solutions have a full time derivative in $H$, see \cite{Lions-Problem}. However, this seems to require regularity of the coefficients in $t$ at the order of a half time derivative, more precisely, this is what proofs require but, strictly speaking, and to the knowledge of the authors, there are no counterexamples showing that it is really necessary. In the case when $A$ is independent of $t$, then $\dhalf$ and $\div_{x} ( A(x)\nabla_{x})$ commute. Using this, we consider $u\in\dom$ and we let $f:=\cH u\in H$. Arguing formally we see that $\dhalf f\in V'$, and using the idea of hidden coercivity discussed above, and Cauchy-Schwarz with $\epsilon$, we can conclude that
\begin{align}\label{estacor}
 \|\nabla_x \dhalf u\|_{2}+ \| \dhalf \dhalf u\|_{2}\lesssim \|f\|_2.
\end{align}
In particular, $\partial_tu\in H$ and $u\in \L^2(\mathbb R,\dom(\div_{x} ( A(x)\nabla_{x})))$. In fact, we obtain that
\begin{align}\label{estacorll}
 \|\cH u\|_{2}\sim \| \partial_tu\|_{2}+\|\div_{x} ( A(x)\nabla_{x}u))\|_2,
\end{align}
whenever $u\in\dom$. This argument can be made rigorous by considering a
regularization of $f$, $f_\delta$, such that $f_\delta\to f$ in $H$ as $\delta\to 0$, and by considering $u_\delta$ such that $\cH u_\delta=f_\delta$. Then
\eqref{estacor} remains true with $(u,f)$ replaced by $(u_\delta, f_\delta)$ and the conclusion follows by taking limits as $\delta\to 0$. We omit further details.

To understand $\dom(\cH^s)$, one can use Theorem \ref{thm:Kato} to shed some light on $\dom({\cH}^s)$. By Theorem \ref{thm:Kato} we have
\begin{equation}\label{do2}
\dom({\cH}^{1/2})=V\mbox{ and }\|u\|_{\dom({\cH}^{1/2})}\sim\|u\|_2+\|\nabla_x u\|_{2}+ \| \dhalf u\|_{2}.
\end{equation}
Given $s\in (0,1)$ we introduce the parabolic Sobolev space $\H^{s}_{\pd_{t} - \Delta_x}$  defined as all functions $u\in H$ such that  $\|\cF^{-1}((|\xi|^2 + \i \tau)^{s/2} \cF u)\|_2<\infty$ where $\cF$ denotes the Fourier transform in the $(x,t)$ variables. We equip $\H^{s}:=\H^{s}_{\pd_{t} - \Delta_x}$ with the norm
 $$\|u\|_{\H^{s}}:=\bigl (\|u\|_2^2+\|\cF^{-1}((|\xi|^2 + \i \tau)^{s/2} \cF u)\|_2^2\bigr )^{1/2},$$
 and we note that $\H^{s}$ is a Hilbert space and that $\H^0=H$. Then, using Theorem \ref{thm:Kato} and interpolation, one can conclude that if $s\in (0,1/2]$, then
 $\dom(\cH^s)=\H^{2s}$ and
 \begin{align*}
\|\cH^s\, u\|_{2} \sim \|\cF^{-1}((|\xi|^2 + \i \tau)^{s} \cF u)\|_2  \qquad (u \in \H^{2s}),
\end{align*}
where now the implicit constants also depend on $s$. While this gives an explicit description of $\dom({\cH}^s)$ for $s\in (0,1/2]$, the situation is less clear for
$s\in (1/2,1)$.  Indeed, given $s\in (1/2,1)$ and writing $\cH^s=\cH^{1/2}\cH^{s-1/2}$, we see that
$\dom(\cH^s)=\{u\in H:\ \cH^{s-1/2}u\in \H^1\}$. To further understand $\dom(\cH^s)$ for $s\in (1/2,1]$ is more complicated though as the case $s=1$ is the maximal regularity problem discussed above.

\subsection{Semigroup theory}\label{sec2-}

As $(\cH,\dom)$ is maximal accretive, $(-\cH,\dom)$ is maximal dissipative, in $H$. Therefore, using the Hille-Yosida or Lumer-Phillips theorem in Hilbert spaces,
$-\cH$ is the infinitesimal generator of a strongly continuous semigroup of contractions, $\mathcal{S}=\mathcal{S}(\lambda)$,  on $H$. In particular, there exists a mapping $\mathcal{S}:[0,\infty)\to \mathcal{L}(H,H)$ such that
\begin{align*}
(i)&\quad\mathcal{S}(0)=I,\\
(ii)&\quad\mathcal{S}(\lambda_1+\lambda_2)=\mathcal{S}(\lambda_1)\mathcal{S}(\lambda_2),\mbox{ for all $\lambda_1,\lambda_2\in (0,\infty)$},\\
(iii)&\quad \lim_{\lambda\downarrow 0}||\mathcal{S}(\lambda)u-u||_2=0\mbox{ for all $u\in H$},
\end{align*}
such that\begin{align}\label{contractive}
 ||\mathcal{S}(\lambda)||_{H\rightarrow H}\leq 1, \mbox{ for all $\lambda\in (0,\infty)$},
\end{align}
and such that
$$-\cH u=\lim_{\lambda\downarrow 0}\frac {(S(\lambda)-I)u}\lambda,$$
whenever $u\in\dom$.

Given $u\in \dom$, $\tilde u(\lambda):=\mathcal{S}(\lambda)u$ is the  unique strong solution to the problem
\begin{align}
 &\tilde u\in C^0([0,\infty),\dom)\cap C^1([0,\infty),H),\notag\\
&\tilde u'(\lambda) +\cH \tilde u(\lambda)=0,\mbox{ for } \lambda\in (0, \infty),\ \tilde u(0)=u.
 \label{Semig}
\end{align}
If $u\in \dom$, then $\tilde u(\lambda)=\mathcal{S}(\lambda)u\in\dom$ and
\begin{align}
\label{id1}\tilde u'(\lambda)=-\mathcal{S}(\lambda)\cH u.
\end{align}
Note that the equation in \eqref{Semig} can{, and should,} be interpreted as,
\begin{align}\label{42semi}
\langle \tilde u'(\lambda),v\rangle_H =-\langle \cH\tilde u(\lambda),v\rangle_{V',V}=-\mathcal{E}(\tilde u(\lambda),v) \mbox{ for all $v\in V$, $\lambda \in (0,\infty)$},
\end{align}
and
\begin{align}\label{42+semi}
\lim_{\lambda\downarrow 0}\tilde u(\lambda)=u\mbox{ in $H$}.
\end{align}
Furthermore, if
$u\in \dom$, and $0\leq\lambda_2\leq \lambda_1$, then
\begin{align}
\label{id2}
\mathcal{S}(\lambda_1)u-\mathcal{S}(\lambda_2)u=-\int_{\lambda_2}^{\lambda_1}\mathcal{S}(\lambda)\cH u\, \d\lambda=-\int_{\lambda_2}^{\lambda_1}\cH\mathcal{S}(\lambda) u\, \d\lambda.
\end{align}

Note that we have
\begin{align}
\label{importantest-}
||(\mathcal{S}(\lambda)-I)u||_2&\leq 2||u||_2=(||u||_2+||\mathcal{H}^0u||_2),
\end{align}
for all $u\in H$ where we  identify $\mathcal{H}^0$ with the identity operator. Using \eqref{id2} and \eqref{contractive} we see that
\begin{align}
\label{importantest}
||(\mathcal{S}(\lambda)-I)u||_2&\leq \int_{0}^{\lambda}||\mathcal{S}(r)\cH u||_2\, \d r\leq \lambda||\cH u||_2{\leq \lambda(||u||_2+||\cH u||_2)} <\infty,
\end{align}
for all $u\in \dom$. Let $T_\lambda:=(\mathcal{S}(\lambda)-I)$ and consider the (function space) couples {$(H,\dom)$ and $(H,H)$.} Using \eqref{inter} and complex interpolation, see for example
Theorem 2.6 in \cite{Lu}, we deduce that

\begin{equation}\label{importantest+-}
{||T_\lambda||_{\dom(\mathcal{H}^s) \to H} \leq ||T_\lambda||_{\dom \to  H}^{(1-s)}||T_\lambda||_{H\to H}^s\leq c \lambda^s,}
\end{equation}
for all $s\in (0,1)$. In particular,
\begin{align}\label{importantest+}
||(\mathcal{S}(\lambda)-I)u||_2&\leq {c \lambda^s(||u||_2+||\cH^s u||_2)}<\infty,
\end{align}
for all $u\in \dom(\mathcal{H}^s)$.

We also note, see Proposition 3.2.1 in \cite{MS}, that we can use $\mathcal{S}$ to express ${\cH}^s$ as
\begin{align}\label{semiuu}
 {\cH}^su=\frac{1}{\Gamma(-s)}\int_0^\infty \lambda^{-s-1}(\mathcal{S}(\lambda)-I)u\, \d \lambda
\end{align}
for $u\in \dom${, and using Corollary 5.1.12 in \cite{MS}, we may extend \eqref{semiuu} to hold for $u\in\dom(\mathcal{H}^s)$}. 
Finally, let
\begin{align}\label{hille-}
R_{m}(\lambda):=\biggl(I+\frac{\lambda}{m}\cH\biggr)^{-m}.
\end{align}
Then the $C_0$-semigroup $\mathcal{S}(\lambda)$ generated by $-\cH$ can be identified,  following the proof of Hille, as
\begin{align}\label{hille}
\mathcal{S}(\lambda)u=\lim\limits_{m\to \infty} R_{m}(\lambda)u
\end{align}
for $\lambda>0$ and  for all $u \in H$, see \cite[Section~IX.1.2]{Kato}. In this sense we can formally state that $\mathcal{S}(\lambda)u=e^{-\lambda\cH}u$.


\subsection{Definition of solutions to $\cH^su=0$ in $\Omega\times J$} Let $\Omega\subset\mathbb R^n$ be a domain, and let $J\subset\mathbb R$ be an interval. We consider solutions to the non-local Dirichlet problem
\begin{align}\label{DP}
\cH^su=(\partial_t  -\div_{x} ( A(x,t)\nabla_{x}))^su&=0\mbox{ for $(x,t)\in \Omega  \times J$},\notag\\
u&=f\mbox{ for $(x,t)\in \mathbb R^{n+1}\setminus (\Omega \times J)$},
\end{align}
where $f:\mathbb R^{n+1}\setminus (\Omega \times J)\to\mathbb R$ is a given function.

 \begin{defn}\label{solu} We say that $u\in\dom (\cH^s)$ is a solution to $\cH^su=0$ in $\Omega\times J$ if $\langle \cH^su,\phi\rangle_H=0$ for all $\phi\in C_0^\infty(\Omega\times J)$.  Given $f\in H$ we say that $u\in\dom (\cH^s)$ is a solution to
the non-local Dirichlet problem in \eqref{DP}, if $u$ is a solution to $\cH^su=0$ in $\Omega\times J$  and if $u=f$ on $\mathbb R^{n+1}\setminus (\Omega\times J)$ in the sense that
$\langle (u-f),\phi\rangle_H=0$ for all $\phi\in C_0^\infty(\mathbb R^{n+1}\setminus (\Omega\times J))$.
\end{defn}


\section{Statement of our results}\label{main}

{Given $s\in (0,1)$ and  $\varepsilon\in (0,1)$  small,  $s+\varepsilon\leq 1$, let $\bar{s}=\max\{1/2,s+\varepsilon\}$.
We say that $\ru(\lambda)$ is a solution to}
\begin{equation}
(\lambda^{1-2s}\ru')'(\lambda) =\lambda^{1-2s}\cH \ru(\lambda), \quad  \lambda\in (0, \infty),\ \ru(0)  = u,
\label{ODE}
\end{equation}
if the following hold.
First,
{
\begin{align}\label{42-}
\ru(\cdot) &\in C_b^0\bigl( [0, \infty), \dom(\mathcal{H}^{\bar{s}}) \bigr) \cap C^{\infty}\bigl( (0, \infty), \dom(\mathcal{H}^{\bar{s}})  \bigr).
\end{align}}
Second,
\begin{align}\label{42}
\langle(\lambda^{1-2s}\ru')'(\lambda),v\rangle_H =\lambda^{1-2s}\langle \cH \ru(\lambda),v\rangle_{V',V} \mbox{ for all $v\in V$, $\lambda \in (0,\infty)$}.
\end{align}
Third,
\begin{align}\label{42+}
\lim_{\lambda\downarrow 0}\ru(\lambda)=u\mbox{ in $H$}.
\end{align}

We first prove the following theorem concerning the connection between ${\cH}^s$ and the extension problem \eqref{42-}-\eqref{42+}.
{
\begin{thm}\label{thm1} Given $s\in (0,1)$ and  $\varepsilon\in (0,1)$  small,  $s+\varepsilon\leq 1$, let $\bar{s}=\max\{1/2,s+\varepsilon\}$. Define $\mathcal U\colon[0,\infty)\to\mathcal L(\dom(\mathcal{H}^{\bar{s}}))$ as
\begin{align}\label{51}
\mathcal U(\lambda):=\frac{1}{\Gamma(s)}\int_0^\infty  h_s({\lambda^2}/{4r})\mathcal{S}(r) \, \frac{\d r}{r}.
\end{align}
Let $\ru (\lambda):=\mathcal U(\lambda)u$, $u\in \dom(\mathcal{H}^{\bar{s}})$.  Then $\ru (\lambda)$ is a solution to \eqref{ODE} in the sense of \eqref{42-}-\eqref{42+} and $\ru (\lambda)$  satisfies
\begin{align}\label{53+---}
\lim_{\lambda\to\infty}\langle \ru (\lambda),v\rangle_H=0,\mbox{ for all $v\in H$.}
\end{align}
Furthermore,
\begin{align}\label{53+--}
||\lambda^{1-2s}\ru'(\lambda)||_2\leq c\max\{1,|\lambda|^{2\varepsilon}\}(||u||_{2}+|| \mathcal{H}^{s+\varepsilon}u||_{2}),
\end{align}
whenever $\lambda\in (0,\infty)$, where $c$ is independent of $u$ and $\lambda$ but depends on $s$ and $\varepsilon$. Also,
\begin{align}\label{53+-}
-\lim_{\lambda\downarrow 0}\lambda^{1-2s}\ru'(\lambda)=-\lim_{\lambda\downarrow 0}\lambda^{1-2s}\frac {(\ru (\lambda)-\ru (0))}\lambda=c_s{\cH}^su\mbox{ in $H$,\  $c_s:=2^{1-2s}\frac{\Gamma(1-s)}{\Gamma(s)}$.}
\end{align}
\end{thm}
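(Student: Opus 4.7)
My plan is to follow the Stinga--Torrea style subordination approach for $\mathcal U$, coupled with two representations of $\lambda^{1-2s}\ru'(\lambda)$ that must be balanced against one another to yield \eqref{53+--}. As a preliminary step, the substitution $\rho=\lambda^2/(4r)$ rewrites
\begin{equation*}
\mathcal U(\lambda)u=\frac{1}{\Gamma(s)}\int_0^\infty \rho^{s-1}e^{-\rho}\,\mathcal S(\lambda^2/(4\rho))u\,d\rho,
\end{equation*}
so that $\mathcal U(0)=I$ on $H$ and \eqref{42+} follows by dominated convergence combined with strong continuity of $\mathcal S$ at $0$. Because $\mathcal S(r)$ leaves $\dom$ invariant and commutes with $\cH$ there, one has $\cH\mathcal U(\lambda)u=\mathcal U(\lambda)\cH u$, so $\mathcal U(\lambda)$ is a contraction on $\dom$; applying dominated convergence to both $\mathcal U(\lambda)u$ and $\mathcal U(\lambda)\cH u$ yields the continuity and boundedness of $\ru(\cdot):[0,\infty)\to\dom$ required in \eqref{42-}, while for $\lambda>0$ all $\lambda$-derivatives are supplied by the Gaussian factor and produce $\ru\in C^\infty((0,\infty),\dom)$.

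For the ODE \eqref{42}, I would verify by direct computation the kernel identity $\partial_\lambda^2 P_s+\tfrac{1-2s}{\lambda}\partial_\lambda P_s=\partial_r P_s$ for $P_s(\lambda,r):=\tfrac{1}{\Gamma(s)}(\lambda/2)^{2s}r^{-s-1}e^{-\lambda^2/(4r)}$. Applying this identity to $\mathcal U(\lambda)u=\int_0^\infty P_s(\lambda,r)\mathcal S(r)u\,dr$, integrating by parts in $r$ (the boundary terms at $r=0$ and $r=\infty$ vanish thanks to the Gaussian and to $\|\mathcal S(r)u\|_2\leq\|u\|_2$), and using $\partial_r\mathcal S(r)u=-\cH\mathcal S(r)u$ for $u\in\dom$, one obtains \eqref{42} after pairing against $v\in V$.

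The heart of the argument is \eqref{53+-} and \eqref{53+--}. Differentiating the subordination formula in $\lambda$ and changing variables back one arrives at the formal identity
\begin{equation*}
\lambda^{1-2s}\ru'(\lambda)=-\frac{2^{1-2s}}{\Gamma(s)}\int_0^\infty \sigma^{-s}e^{-\lambda^2/(4\sigma)}\mathcal S(\sigma)\cH u\,d\sigma,
\end{equation*}
in which the integrand has only $O(\sigma^{-s})$ decay at infinity and so fails to be absolutely convergent. To obtain \eqref{53+--} I would split at $R=\max\{1,\lambda^2\}$: on $(0,R)$, estimate directly against $\|\cH u\|_2$ to gain $\|\cH u\|_2\, R^{1-s}/(1-s)$; on $(R,\infty)$, use $\mathcal S(\sigma)\cH u=-\partial_\sigma \mathcal S(\sigma)u$ and integrate by parts, producing a boundary term at $\sigma=R$ of size $R^{-s}\|u\|_2$ plus an absolutely convergent interior term bounded by $\|u\|_2(R^{-s}+\lambda^2 R^{-s-1})$. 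Inserting $R=\max\{1,\lambda^2\}$ gives \eqref{53+--} in both regimes $\lambda\leq 1$ and $\lambda>1$. For \eqref{53+-}, the same integration by parts on all of $(0,\infty)$ produces a representation whose integrand is dominated by a $\lambda$-independent integrable function, so dominated convergence gives the limit $-(2^{1-2s}/\Gamma(s))\int_0^\infty\sigma^{-s}\mathcal S(\sigma)\cH u\,d\sigma$; this last integral equals $\Gamma(1-s)\cH^s u$, by combining the Balakrishnan formula \eqref{ba} with $(\mu+\cH)^{-1}=\int_0^\infty e^{-\mu\sigma}\mathcal S(\sigma)\,d\sigma$, Fubini, and the reflection formula $\Gamma(s)\Gamma(1-s)=\pi/\sin(\pi s)$, so the limit equals $-c_s\cH^s u$ as required.

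Finally, \eqref{53+---} follows from the subordination form by dominated convergence once one knows $\langle\mathcal S(t)u,v\rangle\to 0$ as $t\to\infty$ for every $v\in H$. This in turn holds because the real part of $\mathcal E$ is coercive in $\nabla_x$ and the Hilbert transform $\HT$ is skew-adjoint on $\L^2(\mathbb R)$, so that $\cH$ and $\cH^\ast$ are both injective on their domains; injectivity of $\cH^\ast$ gives dense range for $\cH$, and any contractive $C_0$-semigroup on a Hilbert space whose generator is injective with dense range converges weakly to zero. The main obstacle is precisely the non-absolute convergence of the formal representation for $\lambda^{1-2s}\ru'(\lambda)$: neither a direct bound by $\|\cH u\|_2$ nor a direct bound by $\|u\|_2$ works uniformly in $\lambda$, and the splitting point $R=\max\{1,\lambda^2\}$ is what encodes the small-$\lambda$/large-$\lambda$ asymmetry visible in \eqref{53+--}.
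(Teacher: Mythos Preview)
Your treatment of \eqref{42-}--\eqref{42+} and the ODE \eqref{42} is essentially the paper's argument, just phrased via the kernel identity $\partial_\lambda^2 P_s+\tfrac{1-2s}{\lambda}\partial_\lambda P_s=\partial_r P_s$ rather than by computing $\ru'$ and $\ru''$ explicitly. For \eqref{53+--} your route genuinely differs: you split the representation $\lambda^{1-2s}\ru'(\lambda)=-\tfrac{2^{1-2s}}{\Gamma(s)}\int_0^\infty\sigma^{-s}e^{-\lambda^2/(4\sigma)}\mathcal S(\sigma)\cH u\,d\sigma$ at $R=\max\{1,\lambda^2\}$ and integrate by parts on the tail, whereas the paper rewrites $-\lambda^{1-2s}\ru'(\lambda)$ as a sum $B_1+B_2$ of integrals involving $(\mathcal S(r)-I)u$ and splits $B_1$ at $r=1$, exploiting $\|(\mathcal S(r)-I)u\|_2\le\min\{r\|\cH u\|_2,2\|u\|_2\}$. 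Both yield \eqref{53+--}; your version is arguably more direct once one has the clean formula for $\lambda^{1-2s}\ru'$.

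There are, however, two genuine gaps. First, your argument for \eqref{53+---} rests on the claim that a contractive $C_0$-semigroup on a Hilbert space with injective generator and dense range converges weakly to zero. This is false: the rotation semigroup on $L^2(\mathbb T)\ominus\{\text{constants}\}$ has generator $\partial_\theta$, which is injective with dense range, yet the semigroup is $2\pi$-periodic. The paper's own justification of \eqref{53+---} is admittedly terse (``follows from \eqref{51apa} and dominated convergence''), and implicitly needs $\langle\mathcal S(r)u,v\rangle\to 0$ as $r\to\infty$; but for the specific operator $\cH$ this has to be argued from its structure, not from an abstract principle that does not hold. Second, your identification of the limit in \eqref{53+-} via Balakrishnan plus the Laplace representation of the resolvent and Fubini is formal: the double integral $\int_0^\infty\!\int_0^\infty\mu^{s-1}e^{-\mu\sigma}\|\mathcal S(\sigma)\cH u\|_2\,d\sigma\,d\mu$ diverges, so Fubini is not available as stated. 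The paper avoids this by invoking directly the absolutely convergent formula $\cH^s u=\tfrac{1}{\Gamma(-s)}\int_0^\infty r^{-s-1}(\mathcal S(r)-I)u\,dr$; equivalently, one integration by parts in your $\sigma$-integral (using $\mathcal S(\sigma)\cH u=-\partial_\sigma(\mathcal S(\sigma)-I)u$) converts it into this convergent expression. Finally, you do not address the difference-quotient half of \eqref{53+-}, which the paper handles by a separate short computation.
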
}

\begin{rem}\label{remma1} It is important to note that to have $||\lambda^{1-2s}\ru'(\lambda)||_2$ finite in \eqref{53+--} it is sufficient to assume that {$u\in \dom(\mathcal{H}^{\bar s})$}, which is weaker than $u\in \dom$, but still stronger than $u\in \dom(\mathcal{H}^{s})$. In \eqref{53+--} the constant $c$ tends to $\infty$ as $\varepsilon\to 0$.
\end{rem}

Given $(x,t), (y,s)\in\mathbb R^{n+1}$, {and} $r>0$, we let $$d (x,t,y,s):=|x-y|+|t-s|^{1/2},$$
and
\[ Q_r ( x, t ) \, := \, \{ ( y, s )\in \mathbb R^{ n + 1 } : | y_i  - x_i| < r,\ t-r^2<s<t \}.\]
Note that by definition, $Q_r ( x, t )$ only contains points which are in the history relative $t$. Making  the additional assumption that $A=A(x,t)=\{A_{i,j}(x,t)\}_{i,j=1}^{n}$ is real and measurable, we derive the following local properties of solutions to the non-local Dirichlet problem in \eqref{DP}.

\begin{thm}\label{Holder} Assume that $A=A(x,t)=\{A_{i,j}(x,t)\}_{i,j=1}^{n}$ is real, measurable, and satisfies \eqref{ellip}. {Let $(z_0,\tau_0)\in\mathbb R^{n+1}$.} Given $s\in (0,1)$ and $\varepsilon\in (0,1)$ small,  $s+\varepsilon\leq 1$, let $\bar{s}=\max\{1/2,s+\varepsilon\}$. Assume that $u\in \dom({\cH}^{\bar{s}})$ is a solution to $\cH^s u=0$ in
${Q}_{4r}(z_0,\tau_0)$ and  that $$\|u\|^2_{{\L}^\infty(\mathbb R^n\times (-\infty,\tau_0])}<\infty.$$ Then, after a redefinition on a set of measure zero,
$u$ is continuous on ${Q}_{4r}(z_0,\tau_0)$. Furthermore, there exist constants $c$, $1\leq c<\infty$,  and $\alpha\in (0,1)$, both depending only on the structural constants $n$, $c_1$, $c_2$, and $s$ and $\bar{s}$,  such that
\[
|{u(x,t)-u(y,s)}|\le c\left(\frac{d (x,t,y,s)}{r}\right)^\alpha \|u\|_{{\L}^\infty(\mathbb R^n\times (-\infty,\tau_0])},
\]
whenever $(x,t)$, $(y,s)\in {Q}_{r}(z_0,\tau_0)$.
 \end{thm}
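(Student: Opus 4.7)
The overall plan is to reduce the non-local problem to a local degenerate parabolic problem via the extension constructed in Theorem \ref{thm1}, and then invoke the interior Hölder regularity theory for degenerate parabolic equations with a Muckenhoupt $A_2$ weight and real, bounded, measurable (possibly non-symmetric) coefficients. The key mechanism connecting the two worlds is that the Dirichlet-to-Neumann limit \eqref{53+-} produces $c_s\cH^s u$, which vanishes on $\Omega\times J$ by hypothesis, so the extension satisfies a \emph{natural} (vanishing weighted Neumann) condition on $\{0\}\times\Omega\times J$ and can be extended by even reflection in $\lambda$.

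First I would apply Theorem \ref{thm1} to $u\in\dom$ to obtain the operator-valued extension $\ru(\lambda)=\mathcal{U}(\lambda)u$ from \eqref{51}, which solves \eqref{ODE} in the sense \eqref{42-}--\eqref{42+}, has the decay \eqref{53+---}, the $\lambda^{1-2s}$-weighted derivative bound \eqref{53+--}, and the Neumann limit \eqref{53+-}. Since $\cH^s u=0$ on $\Omega\times J$ by assumption, \eqref{53+-} reads $\lim_{\lambda\downarrow 0}\lambda^{1-2s}\ru'(\lambda)=0$ in $L^2(\Omega\times J)$. Setting $\wt{\mathcal{U}}(\lambda,x,t):=\mathcal{U}(|\lambda|,x,t)$ for $\lambda\in\mathbb{R}$, this vanishing weighted Neumann datum guarantees that $\wt{\mathcal{U}}$ is a weak solution of the degenerate parabolic equation
\begin{equation*}
|\lambda|^{1-2s}\bigl(\partial_t-\div_x(A(x,t)\nabla_x)\bigr)\wt{\mathcal{U}}-\partial_\lambda\bigl(|\lambda|^{1-2s}\partial_\lambda\wt{\mathcal{U}}\bigr)=0
\end{equation*}
on a neighborhood of $\{0\}\times Q_{4r}(z_0,\tau_0)$ in $\mathbb{R}\times\mathbb{R}^{n+1}$, in the usual variational sense against test functions $\phi\in C_0^\infty$. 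This is made precise by testing \eqref{42} against $\phi(\lambda,\cdot)$, integrating in $\lambda$, integrating by parts in $\lambda$ using \eqref{53+--}, and exploiting that the Neumann boundary term is zero on $\Omega\times J$.

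Next I would use the semigroup representation \eqref{51} combined with the Gaussian upper bounds for the fundamental solution of $\partial_t-\div_x(A\nabla_x)$ in the real-coefficient case (from Aronson's work, invoked via \cite{A}), to obtain a non-negative kernel $\Gamma_{s,\lambda}(x,t,y,\sigma)$ such that
\begin{equation*}
\mathcal{U}(\lambda,x,t)=\iint_{\mathbb{R}^{n+1}}\Gamma_{s,\lambda}(x,t,y,\sigma)\,u(y,\sigma)\,\d y\,\d\sigma,
\end{equation*}
with $\iint\Gamma_{s,\lambda}\le 1$. Evaluating the kernel bound against the $L^\infty$-norm of $u$ provides
\begin{equation*}
\|\wt{\mathcal{U}}\|_{L^\infty(\mathbb{R}\times\mathbb{R}^n\times(-\infty,\tau_0+16r^2])}\le c\,\|u\|_{L^\infty(\mathbb{R}^n\times(-\infty,\tau_0+16r^2])}.
\end{equation*}
This requires a density argument to go from $u\in\dom$ (where the semigroup acts naturally) to the a priori pointwise identity; this is where the Yosida approximations of Section \ref{sec2-} enter, since they let one write $\mathcal{U}(\lambda)$ as a limit of bounded operators with explicit kernel representations before passing to the limit.

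Finally, since the weight $|\lambda|^{1-2s}$ belongs to the parabolic Muckenhoupt class $A_2$ and the spatial coefficient matrix is real, bounded, and elliptic, the interior De Giorgi--Nash--Moser theory for $A_2$-degenerate parabolic equations with bounded measurable (non-symmetric) coefficients applies to $\wt{\mathcal{U}}$; see the degenerate parabolic extensions of Chiarenza--Serapioni and the work of Ishige. This yields Hölder continuity of $\wt{\mathcal{U}}$ on a cylinder of the form $(-r,r)\times Q_r(z_0,\tau_0)$ with exponent $\alpha\in(0,1)$ and constant depending only on $n$, $c_1$, $c_2$, and $s$, in terms of $\|\wt{\mathcal{U}}\|_{L^\infty}$. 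Restricting the Hölder estimate to the slice $\lambda=0$ gives exactly the claimed bound for $u$. The main obstacle is the second step: one must verify rigorously that the semigroup-constructed $\wt{\mathcal{U}}$ satisfies the weak formulation of the degenerate parabolic equation tested against $C_0^\infty$ functions in $(\lambda,x,t)$, including the passage to the limit $\lambda\downarrow 0$ of the boundary term, for which the Yosida regularizations and the estimate \eqref{53+--} are the essential tools.
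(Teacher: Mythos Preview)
Your proposal is correct and follows essentially the same route as the paper: extension via Theorem \ref{thm1}, even reflection using the vanishing weighted Neumann datum (this is the content of Theorem \ref{thm2+}), an $\L^\infty$ control on $\tilde{\mathcal U}$ via the Yosida approximations combined with the Aronson kernel bounds for the resolvent (Lemmas \ref{lemmaEst}--\ref{crucial}), and finally the degenerate parabolic De Giorgi--Nash--Moser theory of \cite{CF,CS}. The only minor deviation is that the paper obtains merely the $\L^2$ average bound \eqref{bound+} for $\mathcal U$ from the pointwise bound on $\mathcal U_\sigma$ and then upgrades via the local boundedness estimate \eqref{eq1deg+aka+}, whereas you assert a direct $\L^\infty$ bound on $\tilde{\mathcal U}$; your version also works since $\L^2$ convergence of a uniformly bounded sequence preserves the pointwise bound a.e.
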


 \begin{thm}\label{Harnack} Assume that $A=A(x,t)=\{A_{i,j}(x,t)\}_{i,j=1}^{n}$ is real, measurable, and satisfies \eqref{ellip}. {Let $(z_0,\tau_0)\in\mathbb R^{n+1}$.} Given $s\in (0,1)$ and $\varepsilon\in (0,1)$ small,  $s+\varepsilon\leq 1$, let $\bar{s}=\max\{1/2,s+\varepsilon\}$. Assume that $u\in \dom({\cH}^{\bar{s}})$ is a solution to $\cH^s u=0$ in
${Q}_{4r}(z_0,\tau_0)$, that $$\|u\|^2_{{\L}^\infty(\mathbb R^n\times (-\infty,\tau_0])}<\infty,$$ and that $u\geq 0$ on $\mathbb R^n\times (-\infty,\tau_0]$.  Then there exist a constant $c$, $1\leq c<\infty$, depending only on the structural constants $n$, $c_1$, $c_2$, and  $s$ and $\bar{s}$, such that
\begin{align*}
\sup_{Q_{2r}^-(z_0,\tau_0)} u
\leq c \inf_{Q_{2r}^+(z_0,\tau_0)} u,
\end{align*}
where
\begin{align*}
Q_{2r}^-(z_0,\tau_0)&:={Q_{2r}(z_0,\tau_0)}\cap \{(x,t):\ \tau_0-3r^2/4<t< \tau_0-r^2/2\},\notag\\
Q_{2r}^+(z_0,\tau_0)&:={Q_{2r}(z_0,\tau_0)}\cap \{(x,t):\ \tau_0-r^2/4<t< \tau_0\}.
\end{align*}
\end{thm}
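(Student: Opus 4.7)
The plan is to transfer the Harnack inequality for the non-local equation $\cH^s u = 0$ into a Harnack inequality for the solution $\mathcal{U}$ of the local degenerate parabolic extension constructed in Theorem \ref{thm1}, and then read off the boundary trace. Since $u \in \dom$, the extension $\mathcal{U}(\lambda,x,t) := (\mathcal{U}(\lambda)u)(x,t)$ given by \eqref{51} lies in $C_b^0([0,\infty),\dom) \cap C^\infty((0,\infty),\dom)$, and by the analysis promised in Section \ref{sec3} it is a weak solution of the weighted degenerate parabolic equation
\begin{equation*}
\lambda^{1-2s}\partial_t \mathcal{U} - \partial_\lambda\bigl(\lambda^{1-2s}\partial_\lambda \mathcal{U}\bigr) - \lambda^{1-2s}\divx\bigl(A(x,t)\nabla_x \mathcal{U}\bigr) = 0
\end{equation*}
on $(0,\infty) \times \mathbb{R}^{n+1}$. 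Because $u$ solves $\cH^s u = 0$ on $Q_{4r}(z_0,\tau_0)$, the limit relation \eqref{53+-} from Theorem \ref{thm1} gives the Neumann condition $\lim_{\lambda \to 0^+} \lambda^{1-2s}\partial_\lambda \mathcal{U} = 0$ distributionally on $Q_{4r}(z_0,\tau_0)$.

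The second step is to show that $\mathcal{U} \geq 0$ on $[0,\infty) \times Q_{4r}(z_0,\tau_0)$. Here I would invoke the kernel representation of Section \ref{sec4}: one approximates $\mathcal{S}(r)$ by its Yosida approximations and uses the non-negativity of the fundamental solution of $\partial_t - \divx(A\nabla_x)$ established by Aronson in \cite{A} under the assumption that $A$ is real. Since the Poisson-type weight $r^{-s-1}(\lambda/2)^{2s}e^{-\lambda^2/(4r)}$ in \eqref{51} is non-negative, one obtains a representation
\begin{equation*}
\mathcal{U}(\lambda,x,t) = \iint_{\mathbb{R}^{n+1}} \Gamma_{s,\lambda}(x,t,y,\sigma)\,u(y,\sigma)\,\d y\,\d \sigma
\end{equation*}
with $\Gamma_{s,\lambda} \geq 0$. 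The standing hypothesis $u \geq 0$ on $\mathbb{R}^n \times (-\infty,\tau_0+16r^2]$ together with the Gaussian decay of $\Gamma_{s,\lambda}$ (controlling the contribution from the tail $\{\sigma > \tau_0+16r^2\}$) then yields $\mathcal{U} \geq 0$ throughout the relevant cylinder.

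Finally, I would exploit that $|\lambda|^{1-2s}$ is a parabolic Muckenhoupt $A_2$ weight (since $1-2s \in (-1,1)$) and reflect $\mathcal{U}$ evenly across $\{\lambda=0\}$, setting $\widetilde{\mathcal{U}}(\lambda,x,t) := \mathcal{U}(|\lambda|,x,t)$. The vanishing Neumann condition promotes the half-space equation to an equation of the same weighted structure on $(-\delta,\delta) \times Q_{3r}(z_0,\tau_0)$. I would then apply the interior Harnack inequality for non-negative weak solutions of degenerate parabolic equations with $A_2$ weights (in the spirit of Chiarenza--Serapioni, Ishige, and Gutiérrez--Wheeden) on a parabolic box of the form $(-r,r) \times Q_{2r}(z_0,\tau_0)$. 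Restricting the resulting two-sided bound to the slice $\lambda = 0$, where $\widetilde{\mathcal{U}}(0,\cdot,\cdot) = u$ by \eqref{42+}, delivers exactly $\sup_{Q_{2r}^-} u \leq c \inf_{Q_{2r}^+} u$.

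The principal obstacle is the second step: making the kernel representation genuinely rigorous from the weak operator-theoretic information on $\mathcal{S}(r)$ available \emph{a priori}. One must pass to the limit in the Yosida approximations inside the time integral in \eqref{51} and verify that the limiting kernel inherits both non-negativity and the Gaussian decay needed to exchange integration with the regularity of $u$; the bound \eqref{53+--} on $\lambda^{1-2s}\ru'$ should be the right quantitative input here. A secondary difficulty is that the weighted degenerate-parabolic Harnack results are traditionally stated for symmetric matrices, so one should check or adapt the De Giorgi--Nash--Moser machinery to accommodate the non-symmetric block-diagonal structure coming from $A(x,t)$ without upsetting the dependence of constants on only $n$, $c_1$, $c_2$.
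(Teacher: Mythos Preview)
Your outline matches the paper's strategy closely: extend via \eqref{51}, reflect evenly using the vanishing Neumann condition coming from \eqref{53+-}, establish non-negativity of the extension through the Yosida approximations and Aronson's kernels, then invoke the Chiarenza--Serapioni degenerate parabolic Harnack inequality and restrict to $\lambda=0$. Your remark about non-symmetric coefficients is also exactly the paper's concern, handled there by a closing remark that the De~Giorgi--Nash--Moser arguments in \cite{CF,CS} go through without symmetry.

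The one place where the paper's execution differs from your proposal is the non-negativity step. You aim to construct a limiting kernel $\Gamma_{s,\lambda}$ for $\mathcal{U}$ itself and verify that it inherits non-negativity and Gaussian bounds; this is precisely what the paper does \emph{not} attempt, since the weak properties of $\mathcal{S}(r)$ make that passage delicate. Instead, the paper only shows that the approximations $\mathcal{U}_\sigma$ (built from $e^{-r\cH_\sigma}$) are pointwise non-negative via their explicit resolvent kernels, and that $\mathcal{U}_\sigma \to \mathcal{U}$ in $L^2$ on the cube. It then borrows the continuity of $\tilde{\mathcal{U}}$ from Theorem~\ref{Holder} and argues by contradiction: if $\tilde{\mathcal{U}}$ were strictly negative at some point, continuity would force it negative on a set of positive measure, contradicting $L^2$-convergence of non-negative approximants. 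This softer route sidesteps any kernel construction for the limit semigroup, which is exactly the obstacle you flagged.

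One minor point: you propose controlling the tail $\{\sigma > \tau_0+16r^2\}$ via Gaussian decay, but this is unnecessary. The parabolic resolvent kernels carry the causality factor $\chi_{(0,\infty)}(t-s)$, so for $(x,t)$ in the cube (hence $t \leq \tau_0+16r^2$) the representation never sees values of $u$ at later times; the hypothesis $u \geq 0$ on $\mathbb{R}^n \times (-\infty,\tau_0+16r^2]$ is already sufficient without any tail estimate.
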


\begin{rem}\label{remma2} Note that in {Theorem \ref{thm1}, Theorem \ref{Holder}, and Theorem \ref{Harnack}} we assume that $u\in \dom({\cH}^{\bar{s}})\subset \dom({\cH}^{{s}})$, i.e., these theorems are established under an assumptions stronger than $u\in\dom({\cH}^{{s}})$. {The reason for this will become clear in Section \ref{sec2} and Section \ref{sec3}.}
\end{rem}

\subsection{Proofs} Theorem \ref{thm1} is a consequence of \cite{gale2013}, except for \eqref{53+--}. However, for us \eqref{53+--}  is crucial when we establish the connection
between the fractional powers, the extension problem and associated boundary value problem, 
{which is not discussed in \cite{gale2013}}, and in particular not for
parabolic operators  allowing for non-symmetric and time-dependent coefficients. For this reason, below we supply the proof of Theorem \ref{thm1} in full detail. Concerning the non-local Dirichlet problem introduced in \eqref{DP}, we will prove that this problem can be studied through the corresponding problem for local operators
\begin{eqnarray}\label{eq1deg}
\mathcal{L}:= \div_{X} ( w(X)B(X,t)\nabla_{X})-w(X)\partial_t,\ (X,t)\in \tilde\Omega\times J.
 \end{eqnarray}
Here $X:=(\lambda,x)=(x_0,x)\in\mathbb R^{n+1}$, $\tilde\Omega:=I\times \Omega \subset\mathbb R^{n+1}$, where $I\subset\mathbb R$ is an interval, and $w=w(X):\mathbb R^{n+1}\to\mathbb R$ is a function such that
\begin{equation}\label{A2}
 \sup_{B_r}\biggl (\barint_{B_r} w(X)\, \d X\biggr )\biggl (\barint_{B_r} \frac 1 {w(X)}\, \d X\biggr )\leq c_3,
\end{equation}
for some $c_3\in (0,\infty)$ and where $B_r\subset\mathbb R^{n+1}$ denotes a standard Euclidean ball. Note that \eqref{A2} states that
$w=w(X)$ belongs to the Muckenhoupt class $A_2(\mathbb R^{n+1},\d X)$. Here $B(X,t)=\{B_{i,j}(X,t)\}_{i,j=0}^{n}$ is a complex, measurable $(n+1)\times (n+1)$-dimensional matrix valued function such that
\begin{equation}\label{ellipaug}
 \kappa^{-1}|\xi|^2\leq  \Re (B(X,t) \xi \cdot \cl{\xi}), \qquad
 |B(X,t)\xi\cdot\zeta|\leq \kappa |\xi||\zeta|,
\end{equation}
for some  $\kappa\in [1,\infty)$, and for all $\xi,\zeta\in \mathbb C^{n+1}$, $(X,t)\in\mathbb R^{n+2}$. In the particular case of the non-local Dirichlet problem in \eqref{DP},
\begin{align}\label{Augmatrix}
B(X,t)=
\begin{bmatrix}
1 & 0 \\
0 & A(x,t)
\end{bmatrix},\ w(X)=w(x_0,x)=w(\lambda,x)=|\lambda|^{1-2s},
\end{align}
as $w(X)=w(x_0,x)=w(\lambda,x)=|\lambda|^{1-2s}\in A_2(\mathbb R^{n+1},\d X)$ whenever $s\in (0,1)$.

Assuming, in addition, that $A$, and hence $B$ in \eqref{Augmatrix}, is real, but not necessarily symmetric, Theorem \ref{Holder} and Theorem \ref{Harnack} states  local H{\"o}lder continuity for solutions to \eqref{DP}, and a Harnack inequality. To prove Theorem \ref{Holder} and Theorem \ref{Harnack} is not completely straightforward, one reason being a lack of $\L^\infty$ estimates for the semigroup $\mathcal{S}$ used in the extension problem. Indeed, assume that
$u\in\dom({\cH}^{\bar{s}})$ is a solution to $\cH^s u=0$ in
${Q}_{4r}(z_0,\tau_0)$ in the sense of Definition \ref{solu}. Then, as we will see, there exists a (traditional) weak solution
 $\tilde {\mathcal U}$ to the equation
\begin{eqnarray}\label{eq1deg+aka}
\mathcal{L}\,  \tilde {\mathcal U} = \div_{X} ( w(X)B(X,t)\nabla_{X} \tilde {\mathcal U} )-w(X)\partial_t \tilde {\mathcal U}  = 0
 \end{eqnarray}
 in ${Q}_{4r}(Z_0,\tau_0)$, $Z_0=(0,z_0)$, such that $\tilde {\mathcal U}(0,x,t)=u(x,t)$ for  a.e. $(x,t)\in {Q}_{4r}(z_0,\tau_0)$. $\tilde {\mathcal U}$ is
 defined in \eqref{globaldpakja} based on
 \begin{align}\label{51ll}
\mathcal U(X,t)=\mathcal U(\lambda,x,t)=\frac{1}{\Gamma(s)}\int_0^\infty  h_s({\lambda^2}/{4r})\mathcal{S}(r)u(x,t) \, \frac{\d r}{r}.
\end{align}
 The underlying idea is to derive regularity/estimates for $u$ based on corresponding estimates for  $\tilde {\mathcal U}$. Hence, one would like at least, to start with, to be able to control say the supremum of  $\tilde {\mathcal U}$ using $u$. Furthermore, it would be an advantage to know that if $u$ is globally positive, then so is $\tilde {\mathcal U}$. These considerations boil down to properties of the semigroup $\mathcal{S}$ and to the potential existence of a kernel representation of the semigroup, and properties of the kernel. Based on the generality of our setting the construction of the kernel for $\mathcal{S}$ is a rather difficult issue in our context. However, using \eqref{hille}, and Gaussian estimates for the fundamental solution for the operator $\cH$ established in ~\cite{A},  we are able  to derive, for our purposes, some approximating kernels and estimates thereof.

\subsection{Perspectives: the extension problem and operator theory} As we in this paper consider operators to which more traditional Fourier analytic, and spectral analysis, techniques do not seem to apply, and as we therefore have had to dive deeper into the world of operator theory,  we believe that it is relevant to give further background on the topics of this paper, and to put our efforts and in particular the extension result into context. Indeed, given \eqref{DN2}, \eqref{DN1} it is  natural to ask, for say a sectorial operator $A$ on a Banach space $X$, if one can define a {function space} valued ODE and a solution
$\ru$,
\begin{align}
 & (\lambda^{1-2s}\ru'(\lambda))'= \lambda^{1-2s}A\ru(\lambda), \quad  \lambda\in (0, \infty),\ \ru(0)  = u, \label{fractional_ODE}
\end{align}
where  $u\in \dom (A^s)$, such that
\begin{equation}
 - \lim\limits_{\lambda \rightarrow 0+} \lambda^{1-2s} \ru'(\lambda) = c_{s} A^{s}u.   \label{limit}
\end{equation}
 Note that in general \eqref{fractional_ODE} is, in line with \eqref{DN2} and \eqref{DN2+}, a linear ODE in the Banach space $X$ with initial datum $u \in X$ which degenerates for $\lambda=0$, unless $s = 1/2$, and which is incomplete since no initial condition for $\ru'$ is given. The {problems} that arise include existence and uniqueness for \eqref{fractional_ODE} and \eqref{limit},  properties {of} the generalized Dirichlet to Neumann map $$u\to - \lim\limits_{\lambda \rightarrow 0+} \lambda^{1-2s} \ru'(\lambda),$$ and the relation between this Dirichlet to Neumann map and $c_{s} A^{s}$.

 The problems defined by \eqref{fractional_ODE}  and \eqref{limit}  have recently been studied rather extensively in the operator theory community as well as in the PDEs community, see \cite{stinga2010,gale2013,arendt2016,MSe} and the references therein. The theory of semigroups, see \cite{Yosida,EngelNagel2000,phillips1952}, plays a fundamental and prominent role in the field as $\ru$ in \eqref{fractional_ODE} is frequently
constructed using the operator valued map
\begin{align}\label{51intro}
\lambda\to\frac{1}{\Gamma(s)}\int_0^\infty h_s({\lambda^2}/{4r})\mathcal{T}(r) \, \frac{\d r}{r}.
\end{align}
Here $\mathcal{T}$  is the  semigroup generated by $A$, assuming that it exists and can be constructed.  If for example $A$ is sectorial with angle less than $\pi/2$ on a Hilbert space, then $\mathcal{T}$ can be constructed as an analytic semigroup using the functional calculus. For more general operators, as considered in this paper, one can  hope to be in the
realm of (strongly) continuous semigroups (of contractions) or, more generally, in the realm of the integrated semigroups of Hieber and Neubrander \cite{Neubrander,Hieber-Annalen,Hieber-JMathAnal,Hieber-Forum}. We refer to \cite{gale2013} for more. A subtle but  important point is {to} decide to what function space the  data $u$ in \eqref{fractional_ODE} is  to belong.  Obviously $u$ must belong to the domain of $A^{s}$, $\dom(A^{s})$, to have \eqref{limit} well defined, but one option is to restrict $u$ to $\dom(A)$ which in the case of sectorial operators is contained in $\dom(A^{s})$.  {Another} problem is to give a clear cut description of $\dom(A^{s})$. From our perspective, beyond \cite{gale2013} we  think that \cite{arendt2016} and \cite{MSe}  are two particularly interesting contributions to the study of \eqref{fractional_ODE}.

In  \cite{arendt2016} (see also \cite{AtE}), W. Arendt et al. studied, motivated by the results of Caffarelli and Silvestre \cite{caffarelli2007}, the precise regularity properties of the Dirichlet problem and the Neumann problem in Hilbert spaces for the equation in \eqref{fractional_ODE}. In this context, the Dirichlet to Neumann map becomes an isomorphism between certain interpolation spaces, depending on the {setup}, and the part of this map which belongs to the underlying Hilbert space is exactly the fractional power. In \cite{arendt2016}, their operator $A$ is not only a sectorial operator, but is generated from a coercive form. Coercivity of the underlying form plays a central role in \cite{arendt2016} and this condition is only relaxed in the final section of the paper where instead  the weaker condition that the form is sectorial with vertex zero is imposed.

In \cite{MSe},  J. Meichsner et al.  construct, for a given densely defined sectorial  operator $A$ on a (general) Banach space $X$, a solution to the initial value problem in \eqref{fractional_ODE}, a solution which turns out to be holomorphic in some sector determined by the angle of sectoriality of $A$. This solution is proven to be the unique solution to the initial value problem and it is proven that if the Dirichlet to Neumann operator is constructed based on the solution, then this operator equals $c_sA^{s}$. The construction in \cite{MSe} is based on the fact that $\sqrt{A}$ is sectorial with angle of sectoriality less than $\frac{\pi}{2}$ and thus gives rise to a holomorphic $C_0$-semigroup as well as a rich functional calculus.

Conceptually the approaches in \cite{arendt2016} and \cite{MSe} are rooted in functional calculus but differ in the construction of the extensions. In \cite{arendt2016} the extension is constructed using the operator valued map
\begin{equation}\label{arendtextension}
\lambda\to \frac{1}{\Gamma(s)}\int_0^\infty e^{-\frac{\lambda^2}{4r}}r^s e^{-rA}A^s \, \frac{\d r}{r},
\end{equation}
while in \cite{MSe} the extension is constructed using
\begin{equation}\label{MSeextension}
  z\to \frac{z^{2s}}{2 \Gamma(2s)} \int\limits_0^\infty r^{s - \frac{1}{2}}
  e^{-z \sqrt{A + r}} (A + r)^{-\frac{1}{2}} \, \d r,\ z\in S_{(\pi-\omega)/2},
 \end{equation}
 if $A$ is sectorial with angle $\omega\in [0,\pi)$. To achieve boundedness estimates for the operator valued map in \eqref{arendtextension}, coercivity estimates for $A$ are relevant. In \eqref{MSeextension} it is important to note that $S_{\theta}$ is the open sector
 $\left\{ z \in \mathbb C \setminus (-\infty, 0]
	                     :\ \abs{\arg z } < \theta \right\}$. In particular, an analysis shows that the operator valued map in \eqref{MSeextension} is not bounded on $X$ at $z=0$ unless $A$ is bounded.

 Considering the weak assumptions on our operator $\cH$, $(\cH,\dom)$ is only maximal accretive and the underlying sesquilinear form is not directly coercive, and considering the fact that we want to work with weak solutions for the PDE defined through the extension, we in this paper construct extensions associated to $\cH^s$ using the semigroup approach in \eqref{51intro}.  To be able to apply the extension chosen to the study of local regularity for equations with real coefficients, we then have to derive appropriate kernel representations.  We think that it is an interesting problem to construct extensions associated to $\cH^s$ using the functional calculus approach in \eqref{arendtextension}, and to try to use the idea of hidden coercivity for parabolic operators, explored in  \cite{N2, CNS, N1, AEN, AEN1}, in the context of \cite{arendt2016}.  Concerning the functional calculus approach in \eqref{MSeextension}, the work in \cite{MSe} is directly applicable to the operator $\cH$ as $(\cH,\dom)$ is maximal accretive and hence the extension \eqref{MSeextension} is well-defined.  Theorem 4.9 and Theorem 5.8 in \cite{MSe} prove that the initial value problem in \eqref{fractional_ODE} has a unique solution for all $s\in (0,1)$, in fact even for all $s \in \mathbb C_{0< \Re s < 1}$, and all densely defined sectorial operators $A$ in a Banach space $X$. In our context their extension is
 $\mathcal{U}(z)=u_{z,s}(\sqrt{\cH})u$ where the function $u_{z,s}$ is defined in Definition 3.1 in \cite{MSe}. Furthermore, if $u\in\dom(\cH^s)$ it seems to follow from their approach that
 \begin{align*}
  -z^{1-2s} \partial_z u_{z, s} (\sqrt{\cH}) u & = c_{s} u_{z, s - 1}(\sqrt{\cH}) \cH^{s}u,
 \end{align*}
 and, in particular, restricting to $\lambda\in\mathbb R_+$,
 \begin{align*}
  \lim\limits_{{\lambda \to 0^+}} -\lambda^{1-2s} \partial_z u_{\lambda, s} (\sqrt{\cH})u = c_{s} \cH^{s}u.
  \end{align*}
  In many respects, \cite{MSe} gives a rather complete analysis of \eqref{fractional_ODE} and \eqref{limit} for sectorial operators. Still, as mentioned in Section 4 in \cite{MSe}, and which is clear from an analysis of \eqref{MSeextension}, the function $U$ lacks continuity at $z=0$ in the {norm-topology of $\mathcal{L}(X)$.} For us this is problematic, as we want to consider weak solutions for PDEs defined based {on} the extension, and  in particular we need \eqref{53+--}, and \eqref{space2reg+} stated below.

\section{The extension problem: proof of Theorem \ref{thm1}}\label{sec2}

{Given $s\in (0,1)$ and $\varepsilon\in (0,1)$ small,  $s+\varepsilon\leq 1$, let $\bar{s}=\max\{1/2,s+\varepsilon\}$. Let $\ru(\lambda):=\mathcal U(\lambda)u$, $u\in \dom(\mathcal{H}^{\bar{s}})$}, be as in the statement of the theorem. Note that by {the} definition of $\Gamma(s)$
\begin{align}\label{51a}
\frac{1}{\Gamma(s)}\int_0^\infty h_s({\lambda^2}/{4r}) \, \frac{\d r}{r}=1.
\end{align}
Applying \eqref{contractive} and \eqref{51a} we deduce
 \begin{align}\label{51apa}
  \frac{1}{\Gamma(s)}\int\limits_0^{\infty} \norm{h_s({\lambda^2}/{4r}) \mathcal{S}(r) f}_{2} \, \frac{\d r}{r} \leq ||f||_{2} \frac{1}{\Gamma(s)}\int\limits_0^{\infty} h_s({\lambda^2}/{4r}) \, \frac{\d r}{r}=||f||_{2},
 \end{align}
 whenever $f\in H$.
 {Hence
 \begin{equation}
     \begin{split}
        \norm{\mathcal{U}(\lambda)u}_2 = \frac{1}{\Gamma(s)} \int_0^\infty \norm{h_s(\lambda^2/4r) \mathcal{S}(r)u}_2 \frac{\d r}{r} \leq  \norm{u}_2 < \infty.
     \end{split}
 \end{equation}
 Using \eqref{hille}, together with the fact that by the functional calculus we have,
 \begin{equation*}
    \mathcal{H}^{\bar s}R_m(\lambda) = R_m(\lambda)\mathcal{H}^{\bar s}
 \end{equation*}
 on $\dom(\mathcal{H}^{\bar s})$, we can conclude that $\mathcal{H}^{\bar s}$ commutes with the semigroup $\mathcal{S}$ on $\dom(\mathcal{H}^{\bar s})$.
Using this together with \eqref{51apa}, we see that for all $u\in\dom(\mathcal{H}^{\bar s}u)$,
 \begin{equation*}
 \begin{split}
    \|\mathcal{H}^{\bar s}\mathcal{U}(\lambda)u\|_2 &= \frac{1}{\Gamma(s)} \int_0^\infty \norm{h_s(\lambda^2/4r) \mathcal{H}^{\bar s}\mathcal{S}(r)u}_2 \frac{\d r}{r}\\
    &= \frac{1}{\Gamma(s)} \int_0^\infty \norm{h_s(\lambda^2/4r) \mathcal{S}(r)\mathcal{H}^{\bar s}u}_2 \frac{\d r}{r}\\
    &\leq \|\mathcal{H}^{\bar s}u\| < \infty,
 \end{split}
 \end{equation*}
 and we can conclude that $\mathcal{U}(\lambda)u \in \dom(\mathcal{H}^{\bar{s}})$.
 In particular, the mapping $\U(\lambda):{\dom(\mathcal{H}^{\bar{s}})}\to {\dom(\mathcal{H}^{\bar{s}})}$ is well-defined for $\lambda > 0$, and so is $\ru (\lambda)$ in the statement of the theorem.
 }
 Now, substituting $\tau := \tfrac{\lambda^2}{4r}$ in the representation for $\ru(\lambda)$ yields
 \[
  \ru(\lambda) = \frac{1}{\Gamma(s)}\int_0^\infty  h_s({\lambda^2}/{4r})\mathcal{S}(r)u \, \frac{\d r}{r}=\frac{1}{\Gamma(s)} \int\limits_0^{\infty} h_s(\tau) \mathcal{S} ( {\lambda^2}/{4\tau}) u \, \frac{\d\tau}{\tau}.
 \]
 The boundedness of $\mathcal{S}$, its strong continuity, and by the dominated convergence theorem, we conclude that {$\ru \in C_b^0\bigl( [0, \infty), \dom(\mathcal{H}^{\bar{s}}) \bigr)$. In particular, $\ru(0)=u$.} Observe that the integrand in the definition of $\ru(\lambda)$, as well as the factor $\lambda^{2s}$, are smooth for ${\lambda > 0}$. Therefore, for every such $\lambda>0$ we can choose a compact interval $I$ with $\lambda \in I \subset (0, \infty)$ and again apply dominated convergence proving the smoothness of $\ru$. In particular, {
 $\ru \in C_b^0\bigl( [0, \infty), \dom(\mathcal{H}^{\bar{s}}) \bigr) \cap C^{\infty}\bigl( (0, \infty), \dom(\mathcal{H}^{\bar{s}})\bigr)$}
 and hence the derivatives $\ru'$ and $\ru''$ are well defined. This proves \eqref{42-} and \eqref{42+}.

Second we prove \eqref{42}.
{We begin by remarking that the right hand side of \eqref{42} is well defined, since it is to be interpreted in the sense of \eqref{42semi}, and ${\rm u}(\lambda)\in \dom(\mathcal{H}^{\bar s})$, and hence by \eqref{do2} and Theorem \ref{thm:Kato}, we have ${\rm u}(\lambda)\in V$.}
For $\lambda> 0$, direct calculations show that
 \begin{align*}
  \ru'(\lambda) &=\frac{1}{\Gamma(s)}\int_0^\infty  \left(\frac\lambda{2r}\right)h_s'({\lambda^2}/{4r})\mathcal{S}(r)u \, \frac{\d r}{r},
 \end{align*}
 and
 \begin{align*}
  \ru''(\lambda) &=\lambda^{-1}\ru'(\lambda)+\frac{1}{\Gamma(s)}\int_0^\infty  \left(\frac\lambda{2r}\right)^2h_s''({\lambda^2}/{4r})\mathcal{S}(r)u \, \frac{\d r}{r}.
 \end{align*}
Hence,
 \begin{align*}
  \ru''(\lambda) + \frac{1-2s}{\lambda}\ru'(\lambda) & =\frac{1}{\Gamma(s)}\int_0^\infty  \left(\left(\frac\lambda{2r}\right)^2h_s''({\lambda^2}/{4r})+\frac{(2-2s)}{\lambda}\left(\frac\lambda{2r}\right)h_s'({\lambda^2}/{4r})\right)\mathcal{S}(r)u \, \frac{\d r}{r}.
 \end{align*}
 Also
 $$\frac{\d}{\d r}(r^{-1}h_s({\lambda^2}/{4r}))=r^{-1}\left(\left(\frac\lambda{2r}\right)^2h_s''({\lambda^2}/{4r})+\frac{(2-2s)}{\lambda}\left(\frac\lambda{2r}\right)h_s'({\lambda^2}/{4r})\right),$$
 and therefore
  \begin{align*}
  \ru''(\lambda) + \frac{1-2s}{\lambda}\ru'(\lambda) & =\frac{1}{\Gamma(s)}\int_0^\infty \frac{\d}{\d r}(r^{-1}h_s({\lambda^2}/{4r}))\mathcal{S}(r)u \, {\d r}.
 \end{align*}
 Using integration by parts  we see that
 \begin{align*}
 \frac{1}{\Gamma(s)}\int_0^\infty \frac{\d}{\d r}(r^{-1}h_s({\lambda^2}/{4r}))\mathcal{S}(r)u \, {\d r}&=-\frac{1}{\Gamma(s)}\int_0^\infty h_s({\lambda^2}/{4r})\frac{\d}{\d r}\mathcal{S}(r)u \, \frac {\d r}{r}\notag\\
 &=\frac{1}{\Gamma(s)}\int_0^\infty h_s({\lambda^2}/{4r})\mathcal{H}\mathcal{S}(r)u \, \frac {\d r}{r}= \cH \ru(\lambda).
 \end{align*}
In particular,
\begin{align*}
(\lambda^{1-2s}\ru')'(\lambda) =\lambda^{1-2s}\cH \ru(\lambda)\mbox{ for all } \lambda \in (0,\infty),
\end{align*}
in the sense defined in \eqref{42}. Hence the proof of \eqref{42} is complete.

\eqref{53+---}-\eqref{53+-} remain to be proven. \eqref{53+---} follows immediately from \eqref{51apa} and dominated convergence. Concerning
\eqref{53+--}, we write out all expressions explicitly. Let $\tilde c_s:=\frac{1}{\Gamma(s)}2^{-2s}$.  For $\lambda> 0$
 we see that
 \begin{align*}
  \ru'(\lambda) &= \frac{2s}\lambda \ru(\lambda) - \tilde c_s\lambda^{2s}\int\limits_0^{\infty} \left( \frac{\lambda}{2r} \right) e^{-\frac{\lambda^2}{4r}} r^{-s-1}\mathcal{S}(r) u\, \d r.
 \end{align*}
Hence
  \begin{align*}
    -\lambda^{1-2s} \ru'(\lambda) = \tilde c_s \Bigl( -2s \int\limits_0^{\infty} r^{-s-1} e^{-\frac{\lambda^2}{4r}} {\mathcal{S}}(r) u \, \d r
								      + \frac{\lambda^2}{2} \int\limits_0^{\infty} r^{-s-2} e^{-\frac{\lambda^2}{4r}} {\mathcal{S}}(r) u \, \d r  \Bigr).
  \end{align*}
  Note that
  \[
   -2s \tilde c_s= {\frac{-s}{\Gamma(s)2^{2s-1}} = \frac{\Gamma(1-s)}{2^{2s-1} \Gamma(s) \Gamma(-s)}=\frac{c_s}{\Gamma(-s)},}
  \]
  {where $c_s$ is as in \eqref{53+-}}.
  Using this we write
  \begin{align*}
   - \lambda^{1-2s} \ru'(\lambda) = c_{s} & \biggl( \frac{1}{\Gamma(-s)} \int\limits_0^{\infty} r^{-s-1} e^{-\frac{\lambda^2}{4r}} \left( {\mathcal{S}(r)} -I \right)u\, \d r  \\
				      & - \frac{\lambda^2}{4s \Gamma(-s)} \int\limits_0^{\infty} r^{-s-2} e^{-\frac{\lambda^2}{4r}} {\mathcal{S}(r)} u \, \d r  + \frac{1}{\Gamma(-s)} \int\limits_0^{\infty} r^{-s-1} e^{-\frac{\lambda^2}{4r}} u \, \d r \biggr).
  \end{align*}
   Integration by parts, in the second term on the second line in the last display, gives
  \begin{align*}
    -&\frac{\lambda^2}{4s \Gamma(-s)} \int\limits_0^{\infty} r^{-s-2} e^{-\frac{\lambda^2}{4r}} \mathcal{S}(r) u \, \d r \; + \;
    \frac{1}{\Gamma(-s)} \int\limits_0^{\infty} r^{-s-1} e^{-\frac{\lambda^2}{4r}} u \, \d r \\
  &= -\frac{\lambda^2}{4s \Gamma(-s)} \int\limits_0^{\infty} r^{-s-2} e^{-\frac{\lambda^2}{4r}} \big( \mathcal{S}(r)-I\big)u\,  \d r,
  \end{align*}
  and hence
  \begin{align*}
   - \lambda^{1-2s} \ru'(\lambda) = B_1(\lambda)u+B_2(\lambda)u,
  \end{align*}
  where
  \begin{align*}
  B_1(\lambda)u&:= c_{s}   \frac{1}{\Gamma(-s)} \int\limits_0^{\infty} r^{-s-1} e^{-\frac{\lambda^2}{4r}} \left( {\mathcal{S}(r)} -I \right)u\, \d r,\notag   \\
B_2(\lambda)u&:= -c_{s} \frac{\lambda^2}{4s \Gamma(-s)} \int\limits_0^{\infty} r^{-s-2} e^{-\frac{\lambda^2}{4r}} \big( \mathcal{S}(r)-I\big)u\,  \d r.
  \end{align*}
   Using \eqref{importantest-} {and \eqref{importantest+},} and splitting the domain of integration in $B_1(\lambda)u$ into the intervals $(0,1]$ and $[1,\infty)$, we deduce
  \begin{equation*}
      \begin{split}
        || B_1(\lambda)u||_{2}  &\leq c_s\frac{1}{\Gamma(-s)}\left( \int_0^{1} r^{-s-1}e^{-\frac{\lambda^2}{4r}} r^{s+\varepsilon}(||u||_{2}+|| \mathcal{H}^{s+\varepsilon}u||_{2}) \d r +2
        \int_{1}^\infty r^{-s-1}e^{-\frac{\lambda^2}{4r}} || u ||_2 \d r \right)\\
        &\leq c_s\frac{1}{\Gamma(-s)}\left( \int_0^{1} r^{\varepsilon-1}e^{-\frac{\lambda^2}{4r}}(||u||_{2}+|| \mathcal{H}^{s+\varepsilon}u||_{2}) \d r +2
        \int_{1}^\infty r^{-s-1}e^{-\frac{\lambda^2}{4r}} || u ||_2 \d r \right)\\
        &\leq c_s\frac{1}{\Gamma(-s)}\left( \int_0^{1} r^{\varepsilon-1}(||u||_{2}+|| \mathcal{H}^{s+\varepsilon}u||_{2}) \d r +2
        \int_{1}^\infty r^{-s-1} || u ||_2 \d r \right).
      \end{split}
  \end{equation*}
  Hence,
  \begin{equation*}
      \begin{split}
        || B_1(\lambda)u||_{2}  &\leq {c_s\frac{1}{\Gamma(-s)}\left( (1/\varepsilon+2/s)||u||_{2}+|| \mathcal{H}^{s+\varepsilon}u||_{2}/\varepsilon\right).}
      \end{split}
  \end{equation*}
  Similarly, {using \eqref{importantest+}} and \eqref{51a} we deduce that
    \begin{align*}
 || B_2(\lambda)u||_{2}&\leq c_{s} \frac{\lambda^2}{4s \Gamma(-s)} \int\limits_0^{\infty} r^{-s-2} e^{-\frac{\lambda^2}{4r}} ||\big( \mathcal{S}(r)-I\big)u||_2 \,  \d r  \\
 &\leq c_{s} \frac{\lambda^2}{4s \Gamma(-s)} \int\limits_0^{\infty} r^{-s-2} e^{-\frac{\lambda^2}{4r}} r^{s+\varepsilon}(||u||_{2}+|| \mathcal{H}^{s+\varepsilon}u||_{2}) \,  \d r \\
 &= c_{s} \frac{\lambda^2}{4s \Gamma(-s)} \int\limits_0^{\infty} r^{\varepsilon-2} e^{-\frac{\lambda^2}{4r}}(||u||_{2}+|| \mathcal{H}^{s+\varepsilon}u||_{2}) \,  \d r.
  \end{align*}
  Hence,
   \begin{align*}
 || B_2(\lambda)u||_{2} &\leq c|\lambda|^{2\varepsilon}(||u||_{2}+|| \mathcal{H}^{s+\varepsilon}u||_{2})<\infty.
  \end{align*}
 This completes the proof of \eqref{53+--}. Using this we can conclude, by dominated convergence and using \eqref{semiuu}, that
  \begin{align}\label{blafir}
   - \lim_{\lambda\downarrow 0} \lambda^{1-2s} \ru'(\lambda)&=\frac{c_s}{\Gamma(-s)}\int_0^\infty r^{-s-1} (\mathcal{S}(r)-I)u \, {\d r}=c_s{\cH}^su,
  \end{align}
  in $H$. Furthermore, using \eqref{51a} we see that
\begin{align*}
\lambda^{1-2s}\frac {(\ru(\lambda)-\ru(0))}\lambda=\frac{1}{\Gamma(s)}\biggl (\frac 1 2\biggr )^{2s}\int_0^\infty r^{-s} e^{-\frac{\lambda^2}{4r}}(\mathcal{S}(r)-I)u \, \frac{\d r}{r}.
\end{align*}
Hence, also
\begin{align*}
-\lim_{\lambda\downarrow 0}\lambda^{1-2s}\frac {(\ru(\lambda)-\ru(0))}\lambda&=-\frac{1}{\Gamma(s)}\biggl (\frac 1 2\biggr )^{2s}\int_0^\infty r^{-s} (\mathcal{S}(r)-I)u \, \frac{\d r}{r}\\
&=\frac{c_s}{\Gamma(-s)}\int_0^\infty r^{-s-1} (\mathcal{S}(r)-I)u \, {\d r}=c_s{\cH}^su,
\end{align*}
in $H$. Combining this with \eqref{blafir} proves \eqref{53+-}. The proof of Theorem \ref{thm1} is complete.

\section{Reinforced weak solutions to the (local) extension problem} \label{sec3}

Given a domain $\Omega\subset\mathbb R^n$, and $J\subset\mathbb R$,  we let ${\L}^2(\Omega\times J)$ be the Hilbert space with norm
\begin{align*}
\|v\|_{{\L}^2(\Omega\times J)} := \biggl (\iint_{\Omega\times J} |v(x,t)|^2\, \d x\d t\biggr )^{1/2}.
\end{align*}
Recall that  ${\E}(\mathbb R^{n+1})={\E}(\mathbb R^n\times\mathbb R)$ is the Hilbert space on $\mathbb R^n\times \mathbb R$ with norm
\begin{align*}
\|v\|_{{\E}(\mathbb R^n\times\mathbb R)} := \bigl (\|v\|_{{\L}^2(\mathbb R^n\times\mathbb R)}^2+\|\gradx v\|_{{\L}^2(\mathbb R^n\times\mathbb R)}^2+\|\dhalf v\|_{{\L}^2(\mathbb R^n\times\mathbb R)}^2\bigr )^{1/2}.
\end{align*}
By the Kato square root estimate, see \eqref{do2}, we have
\begin{align}\label{Katoim}
\|v\|_{{\E}(\mathbb R^n\times\mathbb R)} \sim  \bigl (\|v\|_{{\L}^2(\mathbb R^n\times\mathbb R)}^2+\|\cH^{1/2}v\|_{{\L}^2(\mathbb R^n\times\mathbb R)}^2\bigr )^{1/2}.
\end{align}

We let, throughout the section,  $I\subset\mathbb R$ be a finite interval. We will consider spaces of  functions $u$ on $I$ with values in ${\E}(\mathbb R^n\times\mathbb R)$, $u:I\to {\E}(\mathbb R^n\times\mathbb R)$. Derivatives will be taken in the {distributional sense}, i.e. using the elements of the 
space $C_0^\infty(I)$ of all infinitely differentiable complex valued
functions with compact support as test functions.
Let $u,v\in { \L}_{\loc}^1(I,{\E}(\mathbb R^n\times\mathbb R))$.
We say that $v$ is the {weak derivative} of $u$ if
\begin{align*}
-\int_0^\infty\varphi'(\lambda)\overline{u(\lambda)}\, \d\lambda=\int_0^\infty\varphi(\lambda)\overline{v(\lambda)}\, \d\lambda,
\end{align*}
for all $\varphi\in C_0^\infty(I)$. In that case we write $u':=v$. The weak derivative is unique if it exists and for all
$u\in C^1(I,{\E}(\mathbb R^n\times\mathbb R))$ the weak and classical derivatives coincide.

Given an interval $I\subset\mathbb R$, and $0<s<1$, we introduce the space $W_{1-s}(I,{\E}(\mathbb R^n\times\mathbb R))$ as the Hilbert space of  functions $u:I\to {\E}(\mathbb R^n\times\mathbb R)$, such that $u':I\to \L^2(\mathbb R^n\times\mathbb R)$, and   with norm
\begin{align*}
\|u\|_{W_{1-s}(I,{\E}(\mathbb R^n\times\mathbb R))}&:=\Big(\int_{I}\left(\|u(\lambda)\|_{{\E}(\mathbb R^n\times\mathbb R)}^2+\| u'(\lambda)\|_{{\L}^2(\mathbb R^n\times\mathbb R)}^2\right)|\lambda|^{1-2s}\, \d\lambda\Big)^{\frac 12}\notag\\
&=\Big(\int_{I}\left(\|u(\lambda)\|_{{\E}(\mathbb R^n\times\mathbb R)}^2+\|\partial_\lambda u(\lambda)\|_{{\L}^2(\mathbb R^n\times\mathbb R)}^2\right)|\lambda|^{1-2s}\, \d\lambda\Big)^{\frac 12}.
\end{align*}
We let $C_0^\infty(I, {\E}(\mathbb R^n\times\mathbb R))$ be the set of $\phi:I\to {\E}(\mathbb R^n\times\mathbb R)$ such that $\phi$ is $C^\infty(I)$ smooth with compact support on $I$.

  Let {$\mathcal U\colon[0,\infty)\to\mathcal L(\dom(\mathcal{H}^{\bar s}))$} be defined as in Theorem \ref{thm1} and let
$$\mathcal U(\lambda,x,t):=\mathcal U(\lambda)u(x,t),\ (\lambda,x,t)\in \mathbb R_+\times \mathbb R^n\times\mathbb R,\ {u\in \dom(\mathcal{H}^{\bar s}).}$$
Based on $ {\mathcal U}(\lambda,x,t)$ as above, we introduce $\tilde {\mathcal U}$ on $\mathbb R\times\mathbb R^n\times \mathbb R$ through
\begin{align}\label{globaldpakja}
\tilde {\mathcal U}(\lambda,x,t)&:= {\mathcal U}(\lambda,x,t)\mbox{ for $\lambda\geq 0$},\notag\\
\tilde {\mathcal U}(\lambda,x,t)&:= {\mathcal U}(-\lambda,x,t)\mbox{  for $\lambda<0$}.
 \end{align}
 Using \eqref{Katoim}, and the fact that $\cH^{1/2}$ commutes with the semigroup $\mathcal{S}$ we deduce that
 \begin{align}\label{space2reg}
\Big(\int_{I}\|\tilde {\mathcal U}(\lambda)\|_{{\E}(\mathbb R^n\times\mathbb R)}^2\, |\lambda|^{1-2s}\d\lambda\Big)^{\frac 12}&\sim \Big(\int_{I}
\bigl (\|\tilde {\mathcal U}(\lambda)\|_{{\L}^2(\mathbb R^n\times\mathbb R)}^2+\|\cH^{1/2}\tilde {\mathcal U}(\lambda)\|_{{\L}^2(\mathbb R^n\times\mathbb R)}^2\bigr )\, |\lambda|^{1-2s}\d\lambda\Big)^{\frac 12}\notag\\
&\leq c (||u||_2+||\cH^{1/2}u||_2)\Big(\int_{I}|\lambda|^{1-2s}\d\lambda\Big)^{\frac 12},
\end{align}
whenever {$u\in \dom(\mathcal{H}^{\bar s})$.} Furthermore, using \eqref{53+--} we have
\begin{align*}
\Big(\int_{I}\left(\|\partial_\lambda \tilde {\mathcal U}\|_{{\L}^2(\mathbb R^n\times\mathbb R)}^2\right)|\lambda|^{1-2s}\, \d\lambda\Big)^{\frac 12}&\leq \Big(\int_{I}\left(\||\lambda|^{1-2s}\partial_\lambda \tilde {\mathcal U}\|_{{\L}^2(\mathbb R^n\times\mathbb R)}^2\right)|\lambda|^{2s-1}\, \d\lambda\Big)^{\frac 12}\notag\\
&\leq c (||u||_2+||\cH^{s+\varepsilon}u||_2)\Big(\int_{I}\max\{1,|\lambda|^{4\varepsilon}\}|\lambda|^{2s-1}\d\lambda\Big)^{\frac 12},
\end{align*}
whenever {$u\in \dom(\mathcal{H}^{\bar s})$. Recall that $s\in (0,1)$, $\varepsilon\in (0,1)$ is small, and  $s+\varepsilon\leq 1$.} In particular,
\begin{align}\label{space2reg+}
\|\tilde {\mathcal U}\|_{W_{1-s}(I,{\E}(\mathbb R^n\times\mathbb R))}\leq c(||u||_2+||\cH^{1/2}u||_2+||\cH^{s+\varepsilon}u||_2)<\infty,
\end{align}
whenever {$u\in \dom(\mathcal{H}^{\bar s})$} and ${I}\subset\mathbb R$, and for a constant depending on $n$, $s$, $\varepsilon$ and $I$. Using \eqref{space2reg+} we see that if $s\in (0,1/2)$, then
$\|\tilde {\mathcal U}\|_{W_{1-s}(I,{\E}(\mathbb R^n\times\mathbb R))}$ is finite if $u\in \dom(\cH^{1/2})$, and if $s\in [1/2,1)$ the same conclusion holds if {$u\in \dom(\cH^{s+\varepsilon})$.} I.e., with $\bar{s}=\max\{1/2,s+\varepsilon\}$, then $\|\tilde {\mathcal U}\|_{W_{1-s}(I,{\E}(\mathbb R^n\times\mathbb R))}$ is finite if $u\in \dom(\cH^{\bar{s}})$.  In particular, using Theorem \ref{thm1} we can conclude that if $u\in \dom(\cH^{\bar{s}})$, then  $\tilde {\mathcal U}$ is a reinforced weak solution to the PDE{
 \begin{align}\label{globaldp+}
\partial_\lambda(|\lambda|^{1-2s}\partial_\lambda  \tilde {\mathcal U}) &=|\lambda|^{1-2s}(\partial_t  \tilde {\mathcal U}  -\div_{x} ( A(x,t)\nabla_{x} \tilde {\mathcal U})),
\end{align}}
in $(\mathbb R\setminus \{0\})\times\mathbb R^n\times \mathbb R$, in the sense that for all finite intervals $\overline{I}\subset\mathbb R\setminus\{0\}$, $\tilde {\mathcal U}$ satisfies \eqref{space2reg} and \eqref{space2reg+},  and
\begin{align}\label{globaldp+aa-}
\int_{\mathbb R}\iint_{\mathbb R^n\times\mathbb R} \bigl (A(x,t) \gradx \tilde {\mathcal U} \cdot \cl{\gradx \Phi} +\HT \dhalf \tilde {\mathcal U} \cdot \cl{\dhalf \Phi}+\partial_\lambda  \tilde {\mathcal U}\cl{\partial_\lambda \Phi}\bigr ) \, |\lambda|^{1-2s}\d x\d t\d\lambda=0,
\end{align}
for all   $\Phi\in C_0^\infty(I\times\mathbb R^n\times\mathbb R)$. Furthermore,
\begin{align}\label{globaldpa}
\mp\lim_{\lambda\to 0^\pm}|\lambda|^{1-2s}\partial_\lambda \tilde {\mathcal U}(\lambda,x,t) &=c_s{\cH}^su(x,t),\notag\\
\lim_{\lambda\to 0^\pm} \tilde {\mathcal U}(\lambda,x,t)&=u(x,t),
\end{align}
where the limits are taken in $H$.

Building on Theorem \ref{thm1},  and the above, we prove the following theorem.
\begin{thm}\label{thm2+} Given $s\in (0,1)$ and  $\varepsilon\in (0,1)$ small, $s+\varepsilon\leq 1$, let $\bar{s}=\max\{1/2,s+\varepsilon\}$. Let $\Omega  \subset\mathbb R^n$ be a domain and let $I,J\subset\mathbb R$ be intervals. Assume that $I=(-R,R)$ for some $R\in (0,\infty)$. Let $u\in \dom({\cH}^{\bar s})$ be a solution to ${\cH}^su(x,t)=0$ in $\Omega  \times J$. Let  $\tilde {\mathcal U}(\lambda,x,t)$ be defined as in \eqref{globaldpakja}.
 Then $\tilde {\mathcal U}$ is a reinforced weak solution to the problem
 {
\begin{align}\label{globaldp+fa}
\partial_\lambda(|\lambda|^{1-2s}\partial_\lambda  \tilde {\mathcal U}) &=|\lambda|^{1-2s}(\partial_t  \tilde {\mathcal U}  -\div_{x} ( A(x,t)\nabla_{x} \tilde {\mathcal U}))\mbox{ in $I\times \Omega  \times J$,}\notag \\
\tilde {\mathcal U}(0,x,t)&={\mathcal U}(0,x,t)=u(x,t)\mbox{ on $\{0\}\times \Omega  \times J$}.
\end{align}}
in the sense that $\tilde {\mathcal U}$ satisfies \eqref{space2reg}, $\tilde {\mathcal U}\in W_{1-s}(I,{\E}(\mathbb R^n\times\mathbb R))$ and
\begin{align}\label{globaldp+aa}
\int_{\mathbb R}\iint_{\mathbb R^n\times \mathbb R} \bigl (A(x,t) \gradx \tilde {\mathcal U} \cdot \cl{\gradx \Phi} +\HT \dhalf \tilde {\mathcal U} \cdot \cl{\dhalf \Phi}+\partial_\lambda  \tilde {\mathcal U}\cl{\partial_\lambda  \Phi}\bigr ) \, |\lambda|^{1-2s}\d x\d t\d\lambda=0,
\end{align}
 for all $\Phi\in C_0^\infty(I\times \Omega\times J)$, and
\begin{align}
\lim_{\lambda\to 0}\tilde {\mathcal U}(\lambda,x,t)&=\lim_{\lambda\to 0}{\mathcal U}(\lambda,x,t)=u(x,t)\mbox{ on $\{0\}\times \Omega  \times J$},
\end{align}
in the sense of limits in ${\L}^2(\Omega  \times J,\d x\d t)$.
\end{thm}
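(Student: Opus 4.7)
The discussion preceding the statement already shows that $\tilde{\mathcal{U}}$ satisfies \eqref{globaldp+aa-} for test functions with support in $(\R\setminus\{0\})\times\R^n\times\R$, together with the boundary behavior \eqref{globaldpa} and the global bounds from Theorem \ref{thm1}. What remains is to upgrade the weak identity \eqref{globaldp+aa} to admit test functions $\Phi\in C_0^\infty(I\times\Omega\times J)$, $I=(-R,R)$, whose supports may meet the hyperplane $\{\lambda=0\}$, and to verify the quantitative memberships. The plan is a $\lambda$-cutoff argument: apply \eqref{globaldp+aa-} to $\eta_\epsilon\Phi$, where $\eta_\epsilon\in C^\infty(\R)$ is an even cutoff with $\eta_\epsilon\equiv 0$ on $\{|\lambda|\le\epsilon\}$ and $\eta_\epsilon\equiv 1$ on $\{|\lambda|\ge 2\epsilon\}$, and then let $\epsilon\to 0$. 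The hypothesis $\cH^su=0$ on $\Omega\times J$ will enter precisely to annihilate the boundary contribution produced at $\lambda=0$.

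\textbf{The cutoff computation.} Expanding $\partial_\lambda(\eta_\epsilon\Phi)=\eta_\epsilon'\Phi+\eta_\epsilon\partial_\lambda\Phi$ in \eqref{globaldp+aa-} rewrites that identity as
\begin{equation*}
\int_\R\iint_{\R^n\times\R}\eta_\epsilon\bigl(A\gradx\tilde{\mathcal{U}}\cdot\overline{\gradx\Phi}+\HT\dhalf\tilde{\mathcal{U}}\cdot\overline{\dhalf\Phi}+\partial_\lambda\tilde{\mathcal{U}}\,\overline{\partial_\lambda\Phi}\bigr)|\lambda|^{1-2s}\,\d\lambda\,\d x\,\d t=-B_\epsilon,
\end{equation*}
where the boundary remainder is
\begin{equation*}
B_\epsilon:=\int_\R\eta_\epsilon'(\lambda)\,\bigl\langle|\lambda|^{1-2s}\partial_\lambda\tilde{\mathcal{U}}(\lambda,\cdot,\cdot),\Phi(\lambda,\cdot,\cdot)\bigr\rangle_H\,\d\lambda.
\end{equation*}
As $\epsilon\to 0$ the left-hand side converges by dominated convergence, using the integrable envelopes provided by the memberships to be verified below, to the integral appearing in \eqref{globaldp+aa}. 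For $B_\epsilon$ I split $\int_\R=\int_0^\infty+\int_{-\infty}^0$: on $(0,\infty)$, $\eta_\epsilon'$ is nonnegative with total mass $1$ and concentrates at $0^+$, while on $(-\infty,0)$ it is nonpositive with total mass $-1$ and concentrates at $0^-$. Combined with the one-sided $H$-limits $|\lambda|^{1-2s}\partial_\lambda\tilde{\mathcal{U}}(\lambda,\cdot,\cdot)\to\mp c_s\cH^su$ supplied by \eqref{globaldpa}, and with the continuity of $\lambda\mapsto\Phi(\lambda,\cdot,\cdot)$ into $H$ at $\lambda=0$, a short computation yields
\begin{equation*}
\lim_{\epsilon\to 0}B_\epsilon=-c_s\langle\cH^su,\Phi(0,\cdot,\cdot)\rangle_H-c_s\langle\cH^su,\Phi(0,\cdot,\cdot)\rangle_H=-2c_s\langle\cH^su,\Phi(0,\cdot,\cdot)\rangle_H.
\end{equation*}
Since $\Phi(0,\cdot,\cdot)\in C_0^\infty(\Omega\times J)$ and $u$ solves $\cH^su=0$ in $\Omega\times J$ in the sense of Definition \ref{solu}, this pairing vanishes, yielding \eqref{globaldp+aa}.

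\textbf{Memberships, initial data, and the main obstacle.} The initial condition $\tilde{\mathcal{U}}(\lambda,\cdot,\cdot)\to u$ in $\L^2(\Omega\times J)$ is immediate from the $H$-limit in \eqref{globaldpa} (equivalently \eqref{42+}). For $\tilde{\mathcal{U}}\in W_{1-s}(I,\E(\R^n\times\R))$ and \eqref{space2reg} on the full interval $I=(-R,R)$, uniform-in-$\lambda$ bounds on $\|\tilde{\mathcal{U}}(\lambda)\|_2$, $\|\gradx\tilde{\mathcal{U}}(\lambda)\|_2$ and $\|\dhalf\tilde{\mathcal{U}}(\lambda)\|_2$ follow by applying the contraction estimate \eqref{51apa} with $f\in\{u,\gradx u,\dhalf u\}$, all of which lie in $\L^2(\R^{n+1})$ since $u\in\dom\subset V$. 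Integrability of $\|\partial_\lambda\tilde{\mathcal{U}}(\lambda)\|_2^2|\lambda|^{1-2s}$ across $\lambda=0$ follows from estimate \eqref{53+--} of Theorem \ref{thm1}, which gives $\|\partial_\lambda\tilde{\mathcal{U}}(\lambda)\|_2^2|\lambda|^{1-2s}\le c|\lambda|^{2s-1}(\|u\|_2+\|\cH u\|_2)^2$ for $|\lambda|$ small, and $\int_0^1|\lambda|^{2s-1}\,\d\lambda<\infty$ for every $s\in(0,1)$. The main technical obstacle is the identification of $\lim_{\epsilon\to 0}B_\epsilon$: since \eqref{globaldpa} provides the convergence of $|\lambda|^{1-2s}\partial_\lambda\tilde{\mathcal{U}}(\lambda)$ only in $H$-norm and not pointwise, $B_\epsilon$ must be read as a Bochner integral and the one-sided approximations $\eta_\epsilon'$ paired against the $H$-valued integrand via one-sided continuity at $\lambda=0^\pm$; the vanishing of the resulting limit then uses precisely the hypothesis $\cH^su=0$ on $\Omega\times J$.
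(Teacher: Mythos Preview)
Your proof is correct and follows essentially the same approach as the paper: both arguments excise a neighborhood of $\{\lambda=0\}$, use the already-established weak identity \eqref{globaldp+aa-} there, and then show that the boundary contribution at $\lambda=0$ vanishes via \eqref{globaldpa}, the uniform bound \eqref{53+--}, and the hypothesis $\cH^su=0$ on $\Omega\times J$. The only cosmetic difference is that the paper uses a sharp truncation $\int_\epsilon^\infty+\int_{-\infty}^{-\epsilon}$ together with an integration by parts in $\lambda$ (producing boundary terms at $\lambda=\pm\epsilon$ which are then split as $I_1(\epsilon)+I_2(\epsilon)$), whereas you use a smooth cutoff $\eta_\epsilon$ (producing the remainder $B_\epsilon$); the two limiting computations are equivalent.
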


\begin{proof} By the above we know that $\tilde {\mathcal U}$ is a reinforced weak solution in $(\mathbb R\setminus \{0\})\times\mathbb R^n\times \mathbb R$ and that \eqref{globaldpa} holds
for a.e. $(x,t)\in \mathbb R^n\times \mathbb R$ as the limits are taken in $H$. Furthermore, using \eqref{space2reg+} we have $\tilde {\mathcal U}\in W_{1-s}(I,{\E}(\mathbb R^n\times\mathbb R))$, and as $u\in \dom({\cH}^{\bar s})$ is a solution to ${\cH}^su(x,t)=0$ in $\Omega  \times J$ it follows, from the equality on the first line in \eqref{globaldpa}, that
{
\begin{align}\label{globaldpagga+}
\mp\lim_{\lambda\to 0^\pm}\langle |\lambda|^{1-2s}\partial_\lambda \tilde {\mathcal U}(\lambda),v\rangle_H =0,
\end{align}}
for all functions $v\in {\L}^2(\Omega\times J)$. This is essentially the only new information compared to the discussion before the statement of the theorem, and this is the information that we have to exploit. In particular, to prove the theorem it suffices to prove that
\begin{align}\label{globaldp+aagg}
\biggl (\int_{\epsilon}^\infty +\int_{-\infty}^{-\epsilon}\biggr )\iint_{\mathbb R^n\times\mathbb R} \bigl (A(x,t) \gradx \tilde {\mathcal U} \cdot \cl{\gradx \Phi} +\HT \dhalf  \tilde {\mathcal U} \cdot \cl{\dhalf \Phi}+\partial_\lambda  \tilde {\mathcal U}\cl{\partial_\lambda  \Phi}\bigr ) \, |\lambda|^{1-2s}\d x\d t\d\lambda
\end{align}
tends to 0 as $\epsilon\to 0^+$,  whenever $\Phi\in C_0^\infty(I\times \Omega\times J)$. To proceed we first note that if $\lambda\in \mathbb R\setminus\{0\}$, then
{
\begin{align}\label{ident}
\langle(|\lambda|^{1-2s}\tilde{\mathcal{U}}')'(\lambda),\Phi(\lambda)\rangle_H =|\lambda|^{1-2s}\langle \cH \tilde{\mathcal{U}}(\lambda),\Phi(\lambda)\rangle_{V',V},
\end{align}}
and hence we  deduce that
\begin{align*}
& \biggl (\int_{\epsilon}^\infty +\int_{-\infty}^{-\epsilon}\biggr )\iint_{\mathbb R^n\times\mathbb R} \bigl (A(x,t) \gradx \tilde {\mathcal U} \cdot \cl{\gradx \Phi} +\HT \dhalf  \tilde {\mathcal U} \cdot \cl{\dhalf \Phi}+\partial_\lambda  \tilde {\mathcal U}\cl{\partial_\lambda  \Phi}\bigr ) \, |\lambda|^{1-2s}\d x\d t\d\lambda\notag\\
&=-\iint_{\mathbb R^n\times\mathbb R} \partial_\lambda \tilde{\mathcal{U}}(\epsilon)\cl{\Phi(\epsilon)}\,  |\epsilon|^{1-2s}\d x\d t+
\iint_{\mathbb R^n\times\mathbb R} \partial_\lambda \tilde{\mathcal{U}}(-\epsilon)\cl{\Phi(-\epsilon)}\,  |\epsilon|^{1-2s}\d x\d t\notag\\
&=-\iint_{\mathbb R^n\times\mathbb R} \partial_\lambda {\mathcal{U}}(\epsilon)\cl{\tilde\Phi(\epsilon)}\,  |\epsilon|^{1-2s}\d x\d t,
\end{align*}
where $\tilde \Phi(\lambda,x,t)=\Phi(\lambda,x,t)+\Phi(-\lambda,x,t)$. We write
\begin{align}
\iint_{\mathbb R^n\times\mathbb R} \partial_\lambda {\mathcal{U}}(\epsilon)\cl{\tilde\Phi(\epsilon)}\,  |\epsilon|^{1-2s}\d x\d t&=
\iint_{\mathbb R^n\times\mathbb R} \partial_\lambda {\mathcal{U}}(\epsilon)\cl{(\tilde\Phi(\epsilon)-\tilde\Phi(0))}\,  |\epsilon|^{1-2s}\d x\d t\notag\\
&+\iint_{\mathbb R^n\times\mathbb R} \partial_\lambda {\mathcal{U}}(\epsilon)\cl{\tilde\Phi(0)}\,  |\epsilon|^{1-2s}\d x\d t\notag\\
&=:I_1(\epsilon)+I_2(\epsilon).
\end{align}
Using conclusion \eqref{53+--} of  Theorem \ref{thm1} we have
{
\begin{align}\label{53+--a}
\sup_{\lambda\in (0,1)}|||\lambda|^{1-2s}\partial_\lambda \mathcal{U}||_2<\infty.
\end{align}}
Using \eqref{53+--a} and \eqref{globaldpagga+}, and dominated convergence, we see that
\begin{align*}
|I_1(\epsilon)|\to 0\mbox{ as $\epsilon\to 0$},
\end{align*}
and
\begin{align}\label{ident+}
\limsup_{\epsilon\to 0^+} I_2(\epsilon)=0,
\end{align}
as ${\cH}^su(x,t)=0$ a.e. on $\Omega  \times J$. Hence
\begin{align}\label{globaldp+aaggha}
\limsup_{\epsilon\to 0^+}\iint_{\mathbb R^n\times\mathbb R} \partial_\lambda {\mathcal{U}}(\epsilon)\cl{\tilde\Phi(\epsilon)}\,  |\epsilon|^{1-2s}\d x\d t=0,
\end{align}
and we can conclude that the expression in \eqref{globaldp+aagg} tends to 0 as $\epsilon\to 0^+$. In particular,
\begin{align}\label{globaldp+aaggeu}
\int_{\mathbb R}\iint_{\mathbb R^n\times\mathbb R} \bigl (A(x,t) \gradx \tilde {\mathcal U} \cdot \cl{\gradx \Phi} +\HT \dhalf  \tilde {\mathcal U} \cdot \cl{\dhalf \Phi}+\partial_\lambda  \tilde {\mathcal U}\cl{\partial_\lambda  \Phi}\bigr ) \, |\lambda|^{1-2s}\d x\d t\d\lambda=0,
\end{align}
whenever $\Phi\in C_0^\infty(I\times \Omega\times J)$. This completes the proof of the theorem.
\end{proof}

Let  $u\in \dom({\cH}^{\bar s})$ be a solution to
the non-local Dirichlet problem in \eqref{DP}. Let $B$ and $w$ be as in \eqref{Augmatrix}. Given $u$, Theorem \ref{thm2+} implies that there exists a $\tilde {\mathcal U}=\tilde {\mathcal U}(X,t):=\tilde {\mathcal U}(\lambda,x,t)$,  such that $\tilde {\mathcal U}$ satisfies \eqref{space2reg}, $\tilde {\mathcal U}\in W_{1-s}(I,{\E}(\mathbb R^n\times\mathbb R))$, $I=(-R,R)$, $R>0$, and such that
\begin{align}\label{globaldp+fall}
\tilde {\mathcal U}(0,x,t)=u(x,t)\mbox{ on $\{0\}\times \Omega  \times J$},
\end{align}
and such that
\begin{align}
{\iiint_{\mathbb{R}^n\times\mathbb R} \bigl (B(X,t) \nabla_{X} \tilde {\mathcal U} \cdot \cl{\nabla_{X} {\Phi}} +\HT \dhalf \tilde {\mathcal U} \cdot \cl{\dhalf {\Phi}}\bigr ) \, w(X)\d X\d t=0,}
\end{align}
for all $ {\Phi}\in C_0^\infty(\tilde\Omega\times J)$. Here $X:=(\lambda,x)=(x_0,x)\in\mathbb R^{n+1}$ {and} $\tilde\Omega:=I\times \Omega \subset\mathbb R^{n+1}$. As above, and following \cite{AEN}, we refer to $ \tilde {\mathcal U}$ as a reinforced weak solution to
\begin{eqnarray}\label{eq1deg+}
\mathcal{L}\, \tilde {\mathcal U} := \div_{X} ( w(X)B(X,t)\nabla_{X} \tilde {\mathcal U} )-w(X)\partial_t \tilde {\mathcal U}  = 0,\ (X,t)\in \tilde\Omega\times J.
 \end{eqnarray}
  The solution is referred to as reinforced because \eqref{space2reg} and $\tilde {\mathcal U}\in W_{1-s}(I,{\E}(\mathbb R^n\times\mathbb R))$ encode more regularity in the $t$-variable than what is usually demanded in the weak formulation
 of second order parabolic equations and systems.  Let $\Hdot^{1/2}(\R)$ be the homogeneous Sobolev space of order 1/2. In Section 3 in \cite{AEN} the properties of this space is reviewed and if  $u\in \Hdot^{1/2}(\R)$ and $\phi\in \C_0^\infty(\R)$, then the formula
\begin{align*}
 \int_{\R} \HT\dhalf u\cdot \overline{\dhalf\phi}\d t = - \int_{\R} u \cdot \cl{\partial_{t}\phi} \d t
\end{align*}
holds, where on the right-hand side we use the duality form between $\Hdot^{1/2}(\R)$ and its dual $\Hdot^{-1/2}(\R)$ extending the complex inner product of $\L^2(\R)$. In particular,
\begin{align}\label{globaldp+aaha}
{\iiint_{\mathbb{R}^n\times\mathbb R} \HT\dhalf \tilde {\mathcal U} \cdot \cl{ \dhalf \Phi} \, w(X)\d X\d t} =-\iiint_{\tilde\Omega\times J} \tilde {\mathcal U} \cdot \cl{\partial_t \Phi} \, w(X)\d X\d t,
\end{align}
for all $\Phi\in C_0^\infty(\tilde\Omega\times J)$. Hence,  we can conclude that the reinforced weak solution $\tilde {\mathcal U}$ is a weak solution in the usual sense on $\tilde\Omega\times J$, i.e. $\tilde {\mathcal U}\in \L^2(J, \W^{1,2}(\tilde \Omega))$ and
\begin{align}\label{globaldp+aabb}
\iiint_{\tilde\Omega\times\mathbb R} \bigl (B(X,t) \nabla_{X} \tilde {\mathcal U} \cdot \cl{\nabla_{X} \Phi} -\tilde {\mathcal U} \cdot \cl{\partial_t \Phi}\bigr ) \, w(X)\d X\d t=0,
\end{align}
for all $\Phi\in C_0^\infty(\tilde\Omega\times J)$. This implies that $\pd_{t}\tilde {\mathcal U} \in \L^2(J, \W^{-1,2}(\tilde \Omega))$ and
 \begin{eqnarray}\label{eq1deg+a}
\mathcal{L}\, \tilde {\mathcal U} = \div_{X} ( w(X)B(X,t)\nabla_{X} \tilde {\mathcal U} )-w(X)\partial_t \tilde {\mathcal U}  = 0
 \end{eqnarray}
 on $\tilde\Omega\times J$ in the (traditional) weak sense.

\section{Parabolic equations with real coefficients: local regularity}\label{sec4}

Using Theorem \ref{thm2+} and the subsequent discussion as a starting point, and making  the additional assumption that $A=A(x,t)=\{A_{i,j}(x,t)\}_{i,j=1}^{n}$ is real and measurable, we can derive local properties of solutions to the non-local Dirichlet problem introduced in \eqref{DP}. We continue to denote points in Euclidean $ (n+1) $-space
$ \mathbb R^{n+1} $ by $ (x,t) = ( x_1,
 \dots,  x_n,t)$,  where $ x = ( x_1, \dots,
x_{n} ) \in \mathbb R^{n } $ and $t$ represents the time-coordinate.   Given $(x,t), (y,s)\in\mathbb R^{n+1}$, $r>0$, we let $d (x,t,y,s)$ and $Q_r ( x, t )$ be as introduced before the statement of Theorem \ref{Holder}. As stated, we use lowercase letters $x,y$ to denote points in $\mathbb R^n$. Similarly, we use capital letters $X,Y$ to denote points in $\mathbb R^{n+1}$. In particular, points in Euclidean $ (n+2) $-space
$ \mathbb R^{n+2} $ are denoted by $ (X,t) = (x_0, x_1,
 \dots,  x_n,t)=(\lambda, x_1,
 \dots,  x_n,t)$. The notation $d$ and $Q_r$ generalize immediately from $ \mathbb R^{n+1} $ to $ \mathbb R^{n+2} $,  and we only use capital letters $X,Y$ to emphasize that we
 work in $ \mathbb R^{n+1} $ space wise.

  Assume that $A=A(x,t)=\{A_{i,j}(x,t)\}_{i,j=1}^{n}$ is real, measurable, and satisfies \eqref{ellip}.  Assume that {$u\in \dom({\cH}^{\bar s})$} is a solution to $\cH^s u=0$ in
${Q}_{4r}(z_0,\tau_0)$ in the sense of Definition \ref{solu}. Using Theorem \ref{thm2+} and the subsequent discussion we can conclude that there exists a (traditional) weak solution
 $\tilde {\mathcal U}$ to the equation stated in \eqref{eq1deg+aka} in ${Q}_{4r}(Z_0,\tau_0)$, $Z_0=(0,z_0)$, such that $\tilde {\mathcal U}(0,x,t)=u(x,t)$ for  a.e. $(x,t)\in {Q}_{4r}(z_0,\tau_0)$. $\tilde {\mathcal U}$ is
 defined in \eqref{globaldpakja} based on $\mathcal U$ as in \eqref{51ll}.

 Recall
the construction of $\mathcal{S}$ in Section \ref{sec2-}. By \eqref{hille} we have
\begin{align}\label{Yo5}
\lim_{m\to\infty}\|(\mathcal{S}(r)-R_m(r))u\|_{2}\to 0 \qquad (u \in H),
\end{align}
for every $r\geq 0$ fixed and {where $R_{m}(\lambda)$ was introduced} in \eqref{hille-}. We introduce, for $\m\in \mathbb Z_+$,
\begin{align}\label{51llrep}
\mathcal U_m(X,t):=\mathcal U_m(\lambda,x,t):=\frac{1}{\Gamma(s)}\biggl (\frac\lambda 2\biggr )^{2s}\int_0^\infty r^{-s} e^{-\frac{\lambda^2}{4r}}R_m(r)u(x,t) \, \frac{\d r}{r}.
\end{align}
We first prove the following approximation lemma.

\begin{lem}\label{lemmaEst}
Consider  ${Q}_{4r}(Z_0,\tau_0)\subset\mathbb R^{n+2}$, $Z_0=(0,z_0)$. Then
\begin{align*}
\lim_{m\to \infty}\iiint_{{Q}_{4r}(Z_0,\tau_0)} |\mathcal U(X,t)-\mathcal U_{m}(X,t)|^2\, \d X\d t=0.
\end{align*}
\end{lem}
\begin{proof}  We have
$$\mathcal U(\lambda,x,t)-\mathcal U_m(\lambda,x,t)=\frac{1}{\Gamma(s)}\biggl (\frac\lambda 2\biggr )^{2s}\int_0^\infty r^{-s} e^{-\frac{\lambda^2}{4r}}(\mathcal{S}(r)-R_m(r))u(x,t) \, \frac{\d r}{r}.$$
Using that $\cH$ is maximal accretive we have that
$$||R_m(r)||_{H\to H}\leq 1\mbox{ for all $r>0$, $m\in\mathbb Z_+$}.$$
Hence, letting
$$F_m(\lambda):=\iint_{\mathbb R^n\times \mathbb R} |\mathcal U(\lambda,x,t)-\mathcal U_m(\lambda,x,t)|^2\, \d x\d t,$$
we see that
\begin{align}\label{51ll+}
 F_m(\lambda)\leq \biggl (\frac{1}{\Gamma(s)}\biggl (\frac\lambda 2\biggr )^{2s}\biggr )^2\biggl (\int_0^\infty r^{-s} e^{-\frac{\lambda^2}{4r}}
 \|(\mathcal{S}(r)-R_m(r))u\|_2\, \frac{\d r}{r}\biggr )^2\leq 2\|u\|_2^2,
\end{align}
for all $\lambda>0$ and $m\in\mathbb Z_+$ by the same argument as in \eqref{51apa}. Let $J\subset\mathbb R$ {be} a finite interval and consider $\lambda\in J$. Using \eqref{51ll+}, \eqref{Yo5} and dominated convergence,
\begin{align}\label{51ll+fi}
 \lim_{m\to\infty} F_m(\lambda)&\leq \lim_{m\to\infty}\biggl (\frac{1}{\Gamma(s)}\biggl (\frac\lambda 2\biggr )^{2s}\biggr )^2\biggl (\int_0^\infty r^{-s} e^{-\frac{\lambda^2}{4r}}
 \|(\mathcal{S}(r)-R_m(r))u\|_2\, \frac{\d r}{r}\biggr )^2\notag\\
 &=\biggl (\frac{1}{\Gamma(s)}\biggl (\frac\lambda 2\biggr )^{2s}\biggr )^2\biggl (\int_0^\infty r^{-s} e^{-\frac{\lambda^2}{4r}}\biggl [\lim_{m\to\infty}\
 \|(\mathcal{S}(r)-R_m(r))u\|_2\biggr ]\, \frac{\d r}{r}\biggr )^2=0.
\end{align}
Using this we can conclude that
\begin{align}\label{51ll++}
\lim_{m\to \infty}\int_J\iint_{\mathbb R^n\times \mathbb R} |\mathcal U(\lambda,x,t)-\mathcal U_m(\lambda,x,t)|^2\, \d x\d t\d\lambda=\lim_{m\to \infty}\int_J F_m(\lambda)\, \d\lambda=0.
\end{align}
Hence, if we consider ${Q}_{4r}(Z_0,\tau_0)\subset\mathbb R^{n+2}$, $Z_0=(0,z_0)$, then
\begin{align}\label{51ll++l+}
\lim_{m\to \infty}\iiint_{{Q}_{4r}(Z_0,\tau_0)} |\mathcal U(X,t)-\mathcal U_{m}(X,t)|^2\, \d X\d t=0.
\end{align}
The proof of the lemma is complete.
\end{proof}

To estimate $\mathcal U_{m}$ we will use the following lemma.

\begin{lem} \label{resolvent kernel}
For $\sigma> 0$ and $m \ge 1$, the resolvent $(1+\sigma^{-1} \cH)^{-m}$, defined as a bounded operator on $\L^2(\ree)$, is represented by an non-negative integral kernel $K_{\sigma,m}$ with pointwise bounds
\begin{equation}
\label{eq:gaussian}
  |K_{\sigma,m} (x,t,y,s)| \leq {C{\sigma^{m}} { \chi_{(0,\infty)}}(t-s)}(t-s)^{-n/2+m-1}  {e}^{-{\sigma(t - s)}} {e}^{-c \frac{|x-y|^2}{t-s}},
\end{equation}
where $C,c>0$ depend only on $n$, the ellipticity constants and $m$. Furthermore,
\begin{equation}\label{bla}
\lim_{R\to \infty}\iint_\ree K_{\sigma,m}(x,t,y,s) \chi_{Q_R(0,0)}(y,s) \d y \d s=1,
\end{equation}
where $\chi_{Q_R(0,0)}$ is the indicator function for the parabolic cube $Q_R(0,0)$.
\end{lem}

\begin{proof} This is Lemma 4.3 in \cite{AEN1}.
{We include the proof for completeness.} It suffices to prove the lemma when $m=1$ as iterated convolution in $(x,t)$ of the estimate on the right hand side of \eqref{eq:gaussian} with $m=1$ yields the result. Let $f \in \C^\infty_{0}(\ree)$. Let $u=(1+\sigma^{-1}\cH)^{-1}f$. Then $u\in \L^2(\ree)$ and, in particular, $u$ is a weak solution to $\sigma^{-1}\partial_{t}u-\sigma^{-1} \divx A\gradx u + u= f$. On the other hand, by ~\cite{A} the operator $\cH$ has a fundamental solution, denoted by $K(x,t,y,s)$, having bounds
\begin{eqnarray*}
|K(x,t,y,s)| \leq {C} {\chi_{(0,\infty)}}(t-s) \cdot (t-s)^{-n/2}  {e}^{-c \frac{|x-y|^2}{t-s}}  \qquad \mbox{for $x,y \in \R^n$, $t,s \in \R$},
\end{eqnarray*}
with constants $C,c$ depending only on dimension and the ellipticity constants, and satisfying
\begin{eqnarray}
\label{conservation}
\int_{\R^n} K(x,t,y,s) \, \d y = 1 \qquad \mbox{for $x \in \R^n$, $t,s \in \R$, $t>s.$}
\end{eqnarray}
Furthermore, $K(x,t,y,s)$ is non-negative. Set $K_{\sigma,1}(x,t,y,s)=  \sigma K(x,t,y,s) {e}^{-{\sigma(t - s)}}$ and $$v(x,t)= \iint_\ree K_{\sigma,1}(x,t,y,s) f(y,s) \d y \d s.$$
Obviously also $K_{\sigma,1}(x,t,y,s)$ is non-negative. The estimate on $K$ implies $v\in \L^2(\ree)$ and a calculation shows that
$v$ is a weak solution to the same equation as $u$. Thus, $w:=u-v$ is a weak solution of
$\partial_{t}w-\divx A\gradx w + \sigma w= 0$ and we may use the Caccioppoli estimate  in Lemma 2.1 in \cite{AEN1} in $\ree$. Choosing test functions $\psi$ that converge to $1$ reveals $\gradx w=0$ as $w\in \L^2(\ree)$. Hence $w$ depends only on $t$. Again, as $w\in \L^2(\ree)$, $w$ must be $0$. This shows that $(1+\sigma^{-1}\cH)^{-1}f$ has the desired representation for all $f \in \C^\infty_{0}(\ree)$ and we conclude by density. \eqref{bla} follows readily from the construction of $K_{\sigma,1}$.\end{proof}

Using the previous two lemmas we next prove two lemmas containing estimates for $\mathcal U$.
\begin{lem}\label{crucial} Consider ${Q}_{4r}(Z_0,\tau_0)\subset\mathbb R^{n+2}$, $Z_0=(0,z_0)$, and assume that $$\|u\|^2_{{\L}^\infty(\mathbb R^n\times (-\infty,\tau_0])}<\infty.$$ Then
\begin{align}\label{bound+}
\bariiint_{{Q}_{4r}(Z_0,\tau_0)} |\mathcal U(X,t)|^2\, \d X\d t\leq \|u\|^2_{{\L}^\infty(\mathbb R^n\times (-\infty,\tau_0])}.
\end{align}
\end{lem}
\begin{proof} Using Lemma \ref{resolvent kernel} we see that, if $u\in H$, then
\begin{align}\label{51ll++ahaaa}
R_m(r)u(x,t)&= (I+(m/r)^{-1}\cH)^{-m}u(x,t)= \iint_\ree K_{m/r,m}(x,t,y,s) u(y,s) \d y \d s.
\end{align}
Furthermore, as
\begin{align}\label{51ll++ahaaa2}
 &\bigl |\iint_\ree K_{m/r,m}(x,t,y,s) u(y,s) \d y \d s\bigr |\notag\\
 &=\bigl |\lim_{R\to \infty}\iint_\ree K_{m/r,m}(x,t,y,s) u(y,s)\chi_{Q_R(0,0)}(y,s) \d y \d s\bigr |\notag\\
 &\leq \|u\|_{{\L}^\infty(\mathbb R^n\times (-\infty,t])}\lim_{R\to \infty}\iint_\ree K_{m/r,m}(x,t,y,s)\chi_{Q_R(0,0)}(y,s)\d y \d s\notag\\
 &=\|u\|_{{\L}^\infty(\mathbb R^n\times (-\infty,t])},
\end{align}
we have, for  $r>0$ and $m\in\mathbb Z_+$,  that
\begin{align}\label{51ll++ahaaa3}
|R_m(r)u(x,t)|&\leq \|u\|_{{\L}^\infty(\mathbb R^n\times (-\infty,t])}.
\end{align}
Using this, and the definition of $\mathcal U_{m}$, we deduce that
\begin{align}\label{bound}
|\mathcal U_{m}(X,t)|\leq \|u\|_{{\L}^\infty(\mathbb R^n\times (-\infty,t])}\mbox{ for $(X,t)\in \mathbb R^{n+2}$}.
\end{align}
By Lemma \ref{lemmaEst} we have
\begin{align}
\lim_{m\to \infty}\iiint_{{Q}_{4r}(Z_0,\tau_0)} |\mathcal U(X,t)-\mathcal U_{m}(X,t)|^2\, \d X\d t=0.
\end{align}
Hence,
\begin{align}\label{51ll++l+-}
\iiint_{{Q}_{4r}(Z_0,\tau_0)} |\mathcal U(X,t)|^2\, \d X\d t\leq \limsup_{m\to \infty}\iiint_{{Q}_{4r}(Z_0,\tau_0)} |\mathcal U_{m}(X,t)|^2\, \d X\d t.
\end{align}
Using \eqref{51ll++l+-} and \eqref{bound},
\begin{align}
\bariiint_{{Q}_{4r}(Z_0,\tau_0)} |\mathcal U(X,t)|^2\, \d X\d t\leq \|u\|^2_{{\L}^\infty(\mathbb R^n\times (-\infty,\tau_0])}.
\end{align}
The proof is complete.
\end{proof}

Based on $ \mathcal U_{m}(\lambda,x,t)$ as above, define $\tilde {\mathcal U}_{m}$ on $\mathbb R\times\mathbb R^n\times \mathbb R$ through
\begin{align}\label{globaldpakjap}
\tilde {\mathcal U}_{m}(\lambda,x,t)&:= \mathcal U_{m}(\lambda,x,t)\mbox{ for $\lambda\geq 0$},\notag\\
\tilde {\mathcal U}_{m}(
\lambda,x,t)&:= \mathcal U_{m}(-\lambda,x,t)\mbox{  for $\lambda<0$}.
 \end{align}

\begin{lem}\label{crucial+} Consider ${Q}_{4r}(Z_0,\tau_0)\subset\mathbb R^{n+2}$, $Z_0=(0,z_0)$.   Assume that $u\geq 0$ on $\mathbb R^n\times (-\infty,\tau_0]$. Then
\begin{align}\label{bound+max}
\tilde {\mathcal U}_{m}(X,t)\geq 0\mbox{ on $\mathbb R\times \mathbb R^n\times\mathbb R$,}
\end{align}
and
\begin{align}\label{51ll++l+ha}
\lim_{m\to \infty}|\{(X,t)\in {Q}_{4r}(Z_0,\tau_0):\ |\tilde{\mathcal U}(X,t)-\tilde {\mathcal U}_{m}(X,t)|>\epsilon\}|=0,
\end{align}
{for all $\epsilon>0$.}
\end{lem}
\begin{proof} \eqref{bound+max} is a consequence of \eqref{51ll++ahaaa} and non-negativity of the kernels $\{K_{\sigma,m}(x,t,y,s)\}$ of Lemma \ref{resolvent kernel}. \eqref{51ll++l+ha} follows immediately from Lemma \ref{lemmaEst}.
\end{proof}

\subsection{Proof of Theorem \ref{Holder}} Using Theorem \ref{thm2+} and the subsequent discussion we can conclude that there exists a (traditional) weak solution
 $\tilde {\mathcal U}$ to the problem in \eqref{eq1deg+aka}
 in ${Q}_{4r}(Z_0,\tau_0)$, $Z_0=(0,z_0)$, such that $\tilde {\mathcal U}(0,x,t)=u(x,t)$ for  a.e. $(x,t)\in {Q}_{4r}(z_0,\tau_0)$. Using \cite{CF,CS} we see that
 there exist constants $c$, $1\leq c<\infty$, and $\alpha\in (0,1)$, both depending only on the structural constants $n$, $c_1$, $c_2$, and $s$ and $\bar{s}$, such that
\[
|{\tilde {\mathcal U}(X,t)-\tilde {\mathcal U}(Y,s)}|\le c\left(\frac{d (X,t,Y,s)}{r}\right)^\alpha\sup_{{Q}_{2r}(Z_0,\tau_0)}|\tilde {\mathcal U}|,
\]
whenever $(X,t)$, $(Y,s)\in {Q}_{r}(Z_0,\tau_0)$. Hence
\[
|{u(x,t)-u(y,s)}|\le c \left(\frac{d (x,t,y,s)}{r}\right)^\alpha\sup_{{Q}_{2r}(Z_0,\tau_0)}|\tilde {\mathcal U}|,
\]
whenever $(x,t)$, $(y,s)\in {Q}_{r}(z_0,\tau_0)$. Furthermore, using \cite{CF,CS} we also have
\begin{align}\label{eq1deg+aka+}\sup_{{Q}_{2r}(Z_0,\tau_0)}|\tilde {\mathcal U}|&\leq c\biggl (\bariiint_{{Q}_{4r}(Z_0,\tau_0)}|\tilde {\mathcal U}|^2\, \d X\d t\biggr )^{1/2}.
 \end{align}
 Using Lemma \ref{crucial}
\begin{align}\label{eq1deg+aka+bb}\biggl (\bariiint_{{Q}_{4r}(Z_0,\tau_0)}|\tilde {\mathcal U}|^2\, \d X\d t\biggr )^{1/2}\leq \|u\|_{{\L}^\infty(\mathbb R^n\times (-\infty,\tau_0])}.
 \end{align}
 Using \eqref{eq1deg+aka+bb}
 \[
|{u(x,t)-u(y,s)}|\le c\left(\frac{d (x,t,y,s)}{r}\right)^\alpha \|u\|_{{\L}^\infty(\mathbb R^n\times (-\infty,\tau_0])},
\]
whenever $(x,t)$, $(y,s)\in {Q}_{r}(z_0,\tau_0)$, and this completes the proof.

\subsection{Proof of Theorem \ref{Harnack}} Let $\tilde {\mathcal U}$ and $\tilde {\mathcal U}_{m}$ be as above.  As seen in the proof of Theorem \ref{Holder}, $\tilde {\mathcal U}$ is continuous in ${Q}_{4r}(Z_0,\tau_0)$. Consider $(X,t)\in {Q}_{3r}(Z_0,\tau_0)$ and assume that $\tilde {\mathcal U}(X,t)<0$. Then by Theorem \ref{Holder} there exists
 $\rho>0$ such that $\tilde {\mathcal U}(Y,s)<\delta<0$ for all $(Y,s)\in {Q}_{\rho}(X,t)$. However, this {contradicts} the conclusions of Lemma \ref{crucial+}.  Hence $\tilde {\mathcal U}(X,t)\geq 0$ when $(X,t)\in {Q}_{3r}(Z_0,\tau_0)$. Applying to
 $\tilde {\mathcal U}$ the Harnack inequality proved in  \cite{CS} we see that there exist a constant $c$, $1\leq c<\infty$, depending only on the structural constants $n$, $c_1$, $c_2$, and $s$ and $\bar{s}$, such that
\begin{align*}
\sup_{Q_{2r}^-(Z_0,\tau_0)} \tilde {\mathcal U}
\leq c \inf_{Q_{2r}^+(Z_0,\tau_0)} \tilde {\mathcal U},
\end{align*}
where
\begin{align*}
Q_{2r}^-(Z_0,\tau_0)&:={Q_{2r}(Z_0,\tau_0)}\cap \{(X,t):\ \tau_0-3r^2/4<t< \tau_0-r^2/2\},\notag\\
Q_{2r}^+(Z_0,\tau_0)&:={Q_{2r}(Z_0,\tau_0)}\cap \{(X,t):\ \tau_0-r^2/4<t< \tau_0\}.
\end{align*}
The stated Harnack inequality for $u$ now follows immediately from this Harnack inequality for $\tilde {\mathcal U}$.

\begin{rem} The results established in \cite{CF,CS} are stated under the assumption of symmetric coefficients. However, analyzing the proofs
of the De Giorgi-Moser-Nash arguments one deduces that symmetry is not an issue and the conclusions hold assuming only that $A=A(x,t)=\{A_{i,j}(x,t)\}_{i,j=1}^{n}$ is real, measurable, and satisfies \eqref{ellip}.
\end{rem}

\section{Concluding remarks}\label{conc}
We believe that our paper represents a step towards a regularity theory for fractional powers of parabolic operators with time-dependent, bounded and measurable coefficients. Indeed, the results presented are not final as we, when considering $\cH^s$, frequently assume stronger regularity compared to $u\in\dom(\cH^s)$. Let us first remark that in the special case $s=1/2$ the results presented can be sharpened. Indeed, $e^{-\lambda\sqrt{\cH}}$ is well-defined as $\cH$ is maximal accretive, and if
$u\in\dom(\cH^{1/2})$ then $\ru(\lambda):=e^{-\lambda\sqrt{\cH}}u$ is a reinforced weak solution to the PDE in \eqref{globaldp+}
in $(\mathbb R\setminus \{0\})\times\mathbb R^n\times \mathbb R$.  In particular, using this extension the conclusions of Theorem \ref{thm2+} hold in the case of $s=1/2$ for all
$u\in\dom(\cH^{1/2})$. Therefore, in this case we can conclude that Theorem \ref{Holder} and Theorem \ref{Harnack} holds for $u\in\dom(\cH^{1/2})$ such that $\cH^{1/2}u=0$, and with constants which only depend on the structural constants $n$, $c_1$, $c_2$. In the case $s\in (0,1/2)$, Theorem \ref{Holder} and Theorem \ref{Harnack} are proven
for $u\in\dom(\cH^{1/2})$ such that $\cH^{s}u=0$. I.e., in this case we assume considerably more a priori regularity on $u$ compared to what is needed for the formulation of
$\cH^{s}u=0$. On the other hand, by the solution of the parabolic Kato problem, $\dom(\cH^{1/2})$ has an explicit description. In the case $s\in (1/2,1)$, Theorem \ref{Holder} and Theorem \ref{Harnack} are proven
for $u\in\dom(\cH^{s+\varepsilon})$ such that $\cH^{s}u=0$ and where $\varepsilon>0$ can be chosen arbitrary small, but fixed. We need this slight extra regularity to conclude
\eqref{53+--}. Considering $\cH^{s}$ we would like to only assume $u\in\dom(\cH^{s})$ to make conclusions,  but currently we do not know how to accomplish this  due to the weak properties of the semigroup we use to go from the fractional powers to the extension problem.  For comparison, in a previous version of this paper we simply proved all our results assuming $u\in\dom$, but as discussed it is an open problem to completely decipher the condition $u\in\dom$, and it is hard to pinpoint in explicit terms what a priori regularity on $u$ is assumed in this case. We believe this to be a key problem for future research. In addition, we believe that it may be interesting to further understand what insights semigroup theory, subordination and Bochner’s functional calculus can bring to the topic.\\

\noindent
{\bf Acknowledgement.} The authors like to thank an anonymous referee for a very careful reading of the paper and for several valuable suggestions. The second author would also like to thank Moritz Egert for some valuable comments.

\def\cprime{$'$} \def\cprime{$'$} \def\cprime{$'$}

\end{document}